\theoremstyle{plain}
\newtheorem{theorem}{Theorem}
\newtheorem{corollary}{Corollary}
\newtheorem{proposition}{Proposition}
\newtheorem{lemma}{Lemma}
\newtheorem*{ethm}{Theorem of Edwards (1957)}
\newtheorem*{clemma}{Composition Lemma}
\newtheorem*{rlemma}{Representation Lemma}
\newtheorem*{dlemma}{Density Lemma}
\newtheorem*{elemma}{Extension Lemma}
\newtheorem*{slemma}{Separation Lemma}
\theoremstyle{definition}
\newtheorem{remark}{Remark}
\newtheorem*{example*}{Example}
\theoremstyle{remark}
\newtheorem{step}{Step}
 \newcommand{\ts}{\textstyle}
 \newcommand{\scra}{\mathscr{A}}
 \newcommand{\scrg}{\mathscr{G}}
 \newcommand{\scrm}{\mathscr{M}}
 \newcommand{\scrw}{\mathscr{W}}
 \newcommand{\caln}{\mathscr{N}}
 \newcommand{\calp}{\mathscr{P}}
 \newcommand{\setC}{\mathbb{C}}
 \newcommand{\setH}{\mathbb{H}}
 \newcommand{\setN}{\mathbb{N}}
 \newcommand{\setP}{\mathbb{P}}
 \newcommand{\setQ}{\mathbb{Q}}
 \newcommand{\setR}{\mathbb{R}}
 \newcommand{\setU}{\mathbb{U}}
 \newcommand{\setZ}{\mathbb{Z}}
 \newcommand{\ve}{\varepsilon}
 \newcommand{\vp}{\varphi}
 \newcommand{\vr}{\varrho}
 \newcommand{\vt}{{\vartheta}}
 \newcommand{\om}{\omega}
 \newcommand{\eps}{\epsilon}
 \newcommand{\Beta}{\mathrm{B}}
 \newcommand{\ol}{\overline}
 \newcommand{\wh}{\widehat}
 \newcommand{\wt}{\widetilde}
 \newcommand{\wrt}{\widetilde{\phantom{o}}}
 \newcommand{\pa}{\partial}
 \newcommand{\DOT}{\text{\rm\larger[4]{.}}}
 \newcommand{\CDOT}{{\raisebox{.4ex}{\text{\rm\larger[3]{.}}}}}
 \newcommand{\AST}{\text{\raisebox{-.5ex}{\huge$\ast$}}}
 \DeclareMathOperator{\kleino}{o}
 \DeclareMathOperator{\Span}{span}
 \DeclareMathOperator{\re}{Re}
 \DeclareMathOperator{\conv}{conv}
 \DeclareMathOperator{\cone}{cone}
 \DeclareMathOperator{\algint}{algint}
 \DeclareMathOperator{\aff}{aff}
 \DeclareMathOperator{\sgn}{sgn}
 \newenvironment{enumeratea}{\begin{enumerate}%
    [\hspace*{1.95em}\upshape a)]}
    {\end{enumerate}}
 \newcommand{\itemref}[1]{\ref{#1})}
\begin{document}

\baselineskip=14pt

\title[Weighted inversion of general Dirichlet series]
{Weighted inversion of general Dirichlet series}

\author[Helge Gl\"{o}ckner]{Helge Gl\"{o}ckner$^*$}

\thanks{$^*$The first author was supported by Deutsche Forschungsgemeinschaft,
GZ: GL 357/5--2.}

\address{Universität Paderborn\\
Institut f\"{u}r Mathematik\\
Warburger Str. 100\\
D-33098 Paderborn\\
Germany}

\email{glockner@math.upb.de}

\author{Lutz G. Lucht}
\address{Technische Universität Clausthal \\
Institut f\"{u}r Mathematik \\
Erzstr. 1 \\
D-38678 Clausthal-Zellerfeld \\
Germany}

\email{lg.lucht@cintech.de}

\date{August 1, 2012}

\begin{abstract}
Inversion theorems of Wiener type are essential tools in analysis and number
theory. We derive a weighted version of an inversion theorem of Wiener type
for general Dirichlet series from that of Edwards from 1957, and we outline
an alternative proof based on the duality theory of convex cones and
extension techniques for characters of semigroups. Variants and arithmetical
applications are described, including the case of multidimensional weighted
generalized Dirichlet series.
\end{abstract}

\subjclass[2010]{Primary 11M41; Secondary 30B50, 30J99, 46H99}

\keywords{General Dirichlet series, weighted inversion, Banach algebra,
dual cone, rational vector space, separation theorem, Hahn-Banach theorem,
rational polytope, semigroup algebra}

\maketitle


\section{Introduction} \label{S1}


\noindent
By
$\scra:=\scra(\Lambda):=\{a\colon\Lambda\to\setC\text{ with }\|a\|<\infty\}$
we denote the class of complex-valued functions defined on an
additive semigroup $\Lambda\subseteq [0,\infty)$ with $0\in\Lambda$ and
satisfying
\begin{equation*}
\|a\|:=\sum_{\lambda\in\Lambda}\,|a(\lambda)|<\infty.
\end{equation*}
Then, under the usual linear operations and the convolution defined by
\begin{equation} \label{eq1}
c(\lambda) := (a*b)(\lambda) :=
\sum_{\substack{\lambda',\lambda''\in\Lambda \\ \lambda'+\lambda''=\lambda}}
a(\lambda')\,b(\lambda'') \qquad (\lambda\in\Lambda),
\end{equation}
$\scra$ forms a commutative unitary Banach algebra. In fact, the convolution
is well defined by~\eqref{eq1} because of $\|c\|\leq\|a\|\cdot\|b\|<\infty$
for $a,b\in\scra$ with the above norm $\|\DOT\|$, and the function
$\ve$ given by $\ve(\lambda)=\delta_{0,\lambda}$ with Kronecker's symbol
$\delta\colon\Lambda^2\to\{0,1\}$ serves as unity (cf.~\cite{HZ1956}).

Let $\setH:=\{s\in\setC:\re{s}>0\}$ denote the open right half plane.
Then the Banach algebra $\scra$ is isomorphic to the Banach algebra
$\wt{\scra}=\wt{\scra}(\Lambda)$ of absolutely convergent \emph{general
Dirichlet series}
\begin{equation*}
\wt{a}(s) = \sum_{\lambda\in\Lambda}\,a(\lambda)\,e^{-\lambda s}
            \qquad (s\in\ol{\setH})
\end{equation*}
associated with $a\in\scra$, under the usual linear operations, pointwise
multiplication and norm $\|\wt{a}\|:=\|a\|$. This allows to switch between
complex sequences and Dirichlet series.

For instance, inversion in $\wt{\scra}$ is equivalent to that in $\scra$, and
the inversion theorem of Edwards~\cite{Ed1957} may be formulated as

\begin{ethm}
The multiplicative group of $\scra$ is
$\scra^*=\{a\in\scra:0\notin\ol{\wt{a}(\setH)}\}$.
\end{ethm}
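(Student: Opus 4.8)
The plan is to proceed by Gelfand theory. As $\scra=\scra(\Lambda)$ is a commutative unital Banach algebra, $a$ is invertible exactly when $\phi(a)\neq 0$ for every character (nonzero multiplicative functional) $\phi$, so one identifies the Gelfand spectrum $\Delta(\scra)$ and the range $\sigma_{\scra}(a)=\{\phi(a):\phi\in\Delta(\scra)\}$ of the Gelfand transform. Writing $e_\lambda\in\scra$ for the point mass at $\lambda\in\Lambda$ one has $e_\lambda*e_\mu=e_{\lambda+\mu}$, $e_0=\ve$ and $\|e_\lambda\|=1$, so each character $\phi$ restricts to a \emph{bounded semicharacter} $\chi_\phi\colon\Lambda\to\setC$, $\chi_\phi(\lambda):=\phi(e_\lambda)$, i.e.\ $\chi_\phi(0)=1$, $\chi_\phi(\lambda+\mu)=\chi_\phi(\lambda)\chi_\phi(\mu)$ and $|\chi_\phi(\lambda)|\le1$; since the finitely supported functions are $\|\DOT\|$-dense in $\scra$, $\phi(a)=\sum_\lambda a(\lambda)\chi_\phi(\lambda)$, and conversely every bounded semicharacter gives a character (cf.~\cite{HZ1956}). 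For $s\in\setH$ the map $\lambda\mapsto e^{-\lambda s}$ is such a semicharacter, with $\phi(a)=\wt{a}(s)$; hence $\wt{a}(\setH)\subseteq\sigma_{\scra}(a)$, and $\sigma_{\scra}(a)$ being closed, $\ol{\wt{a}(\setH)}\subseteq\sigma_{\scra}(a)$. This already gives the easy inclusion $\scra^*\subseteq\{a:0\notin\ol{\wt{a}(\setH)}\}$ (alternatively: if $ab=\ve$ then $|\wt{a}(s)|\ge\|b\|^{-1}$ throughout $\ol{\setH}$). The theorem is thus equivalent to the reverse inclusion $\sigma_{\scra}(a)\subseteq\ol{\wt{a}(\setH)}$, i.e.\ to $\phi(a)\in\ol{\wt{a}(\setH)}$ for every bounded semicharacter.

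For the reverse inclusion I would first reduce to finitely supported $a$ with $\Lambda$ finitely generated and, splitting off the constant term, to $a$ supported on $F=\{\lambda_1,\dots,\lambda_n\}\subseteq(0,\infty)$ with $\langle F\rangle=\Lambda$: taking $a_N:=a\cdot\mathbf 1_{F_N}$ for finite $F_N\uparrow\operatorname{supp}(a)$ gives $\|a-a_N\|\to0$, hence $\phi(a_N)\to\phi(a)$ and $\wt{a_N}\to\wt{a}$ uniformly on $\ol{\setH}$, so $\ol{\wt{a_N}(\setH)}\to\ol{\wt{a}(\setH)}$ in Hausdorff distance, while $\phi(a_N)$ and $\ol{\wt{a_N}(\setH)}$ depend only on $F_N$ and on $\chi_\phi|_{F_N}$, a bounded semicharacter of $\langle F_N\rangle$. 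Now set $c_i:=\chi_\phi(\lambda_i)$, $G:=\langle\Lambda\rangle\subseteq\setR$ (free abelian of finite rank) and $L:=\{k\in\setZ^n:\sum_i k_i\lambda_i=0\}$, the lattice of integer relations among the $\lambda_i$. Multiplicativity of $\chi_\phi$ forces $\prod_i c_i^{m_i}=\prod_i c_i^{m_i'}$ whenever $\sum_i m_i\lambda_i=\sum_i m_i'\lambda_i$ with $m,m'\in\setN_0^n$, together with $|c_i|\le1$, and an elementary argument shows these are exactly the constraints on the tuples $(\chi_\phi(\lambda_i))_i$. On the other hand, Kronecker's equidistribution theorem identifies the closure of the phase orbit $\{(e^{-i\lambda_1 t},\dots,e^{-i\lambda_n t}):t\in\setR\}$ with the subtorus $\Phi:=\{\zeta\in\setC^n:|\zeta_i|=1,\ \prod_i\zeta_i^{k_i}=1\ \text{for all }k\in L\}$ (these are also the phase constraints satisfied by bounded semicharacters), and a compactness argument then yields
\[
K\ :=\ \ol{\bigl\{(e^{-\lambda_1 s},\dots,e^{-\lambda_n s}):s\in\setH\bigr\}}\ =\ \ol{C}\cdot\Phi ,
\]
the coordinatewise product of $\Phi$ with the closure $\ol{C}$ of the modulus arc $\{(\rho^{\lambda_1},\dots,\rho^{\lambda_n}):0<\rho<1\}$; consequently $\ol{\wt{a}(\setH)}=\{\sum_i a(\lambda_i)w_i:w\in K\}$.

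The remaining, and genuinely delicate, point is that $\phi(a)=\sum_i a(\lambda_i)c_i$ lies in $\{\sum_i a(\lambda_i)w_i:w\in K\}$ even though $\chi_\phi$ itself need not be a weak-$*$ limit of evaluations $\chi_s$ --- on $\Lambda=\setN_0+\sqrt 2\,\setN_0$, for instance, the bounded semicharacter with $\chi(1)=\chi(\sqrt 2)=\tfrac12$ is of neither form, nor such a limit. The mechanism is that $\ol{\wt{a}(\setH)}$ is ``fat'' enough: for each fixed modulus vector, the torus $\Phi$ carries it, under the functional $w\mapsto\sum_i a(\lambda_i)w_i$, onto a compact connected planar set, and as the modulus vector travels along $\ol{C}$ from $(1,\dots,1)$ to $0$ these sets sweep out a plane region that already contains $\sum_i a(\lambda_i)c_i$; converting this into a proof is an intermediate-value/covering argument, to be rounded off by the more routine treatment of semicharacters that vanish somewhere on $\Lambda$ (passage to the face $\Lambda\setminus\chi_\phi^{-1}(0)$, or approximation by nonvanishing ones). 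I expect exactly this last step --- equivalently, the comparison of $\sigma_{\scra}(a)$ with $\ol{\wt{a}(\setH)}$ beyond the evaluation characters --- to be the main obstacle; and it is precisely here that the alternative argument outlined in this paper trades Kronecker's theorem for the duality theory of convex cones together with extension techniques for characters of semigroups.
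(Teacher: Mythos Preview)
Your setup is sound: the identification of $\Delta(\scra)$ with bounded semicharacters, the easy inclusion, the reduction to finitely supported $a$, and the use of Kronecker's theorem to describe the closure of the phase orbit are all correct and match the paper's framework through the Representation Lemma and the opening of the Density Lemma proof. You also correctly isolate the genuine difficulty: a bounded semicharacter $\chi$ need not be a weak-$*$ limit of the evaluation characters $\chi_s$, yet one must still show $h_\chi(a)\in\ol{\wt{a}(\setH)}$.

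But the proposal stops precisely where the work begins. The phrase ``converting this into a proof is an intermediate-value/covering argument'' is not an argument, and the sketch---sweeping connected planar images of tori along the modulus arc $\ol{C}$---does not by itself force $\sum_i a(\lambda_i)c_i$ to lie in the swept region. The corresponding density statement is \emph{false} for merely continuous functions of $(e^{-\beta_1 s},\ldots,e^{-\beta_k s})$, so holomorphy must enter somewhere, and your outline never invokes it. The paper supplies it via the maximum principle (its Lemma~\ref{L1}, following Hewitt--Williamson and Spilker--Schwarz): if $g:=P-c$ were bounded away from~$0$ on the curve $s\mapsto(e^{-\beta_1 s},\ldots,e^{-\beta_k s})$, then Kronecker density gives $|g|\ge\ve$ on every poly-circle $e^{-\frak{b}\sigma}\pa\setU$, and applying the maximum modulus principle to $1/g$ pushes $|g|\ge\ve$ onto the full polydisk---contradicting $c\in P(\ol{\setU}^k)$. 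Your covering heuristic has no substitute for this step.

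Two further remarks. First, your illustrative example $\Lambda=\setN_0+\sqrt{2}\,\setN_0$ with $\chi(1)=\chi(\sqrt{2})=\tfrac12$ is already the \emph{free} case: $\{1,\sqrt{2}\}$ is a free generating set, and Lemma~\ref{L1} disposes of it directly. The convex-cone duality and character-extension machinery in the paper does not replace Kronecker's theorem (which the paper also uses) but rather reduces the \emph{non-free} case to the free one: given finite $\Gamma\subseteq\Lambda$ and a bounded character~$\psi$, the Extension Lemma produces a $\setQ$-linearly independent set $\Beta\subset(0,\infty)$ with $\Gamma\subseteq[\Beta]$ and a bounded character on $[\Beta]$ agreeing with $\psi$ on~$\Gamma$. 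Second, your ``more routine'' dismissal of semicharacters that vanish somewhere understates matters; in the paper it is exactly the interplay between $\Gamma_0=\psi^{-1}(0)\cap\Gamma$ and $\Gamma'=\Gamma\setminus\Gamma_0$ that drives the cone-theoretic construction (Steps~3--9 of the Extension Lemma, including a rational Hahn--Banach separation), and this is not routine at all.
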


\begin{remark} \label{R1}
Special cases of Edwards' theorem trace back to Wiener \cite{Wi1932} for
Fourier and power series and to Hewitt and Williamson \cite{HW1957} for
ordinary Dirichlet series. Elementary proofs of the latter were given by
Spilker and Schwarz \cite{SS1979}, Goodman and Newman \cite{GN1984}.
\end{remark}

We aim for investigating the convergence quality of $\wt{a}$ on
$\ol{\setH}$ and refine the above norm with the help of \emph{weight
functions} on $\Lambda$, i.e. functions $w\colon\Lambda\to(0,\infty)$
satisfying $w(0)=1$ and $w(\lambda'+\lambda'')\leq w(\lambda')\,w(\lambda'')$
for all $\lambda',\lambda''\in\Lambda$. We let $\scrw=\scrw(\Lambda)$ be the
set of all weight functions~$w$ on $\Lambda$ which are \emph{admissible} in
the sense that

\begin{enumeratea}
\item \label{ita}
$w(\lambda)\geq 1$ for all $\lambda\in\Lambda$, and
\item \label{itb}
$\lim_{k\to\infty}{\!\textstyle\sqrt[k]{w(k\lambda)}}=1~$ for all
$\lambda\in\Lambda$.%
\footnote{The sum $\lambda+\cdots+\lambda$ with $k\in\setN$ summands
$\lambda\in\Lambda$ is written as $k\lambda$.}
\end{enumeratea}

Note that $\scrw$ is closed under pointwise multiplication. If $w\in\scrw$,
then the functions $a\in\scra$ with the weighted norm
\begin{equation*}
\|a\|_w:=\sum_{\lambda\in\Lambda}\,|a(\lambda)|\,w(\lambda)<\infty
\end{equation*}
form a commutative unitary Banach algebra
$\scra_w=\scra_w(\Lambda)\subseteq\scra=\scra_1$
(see Section~\ref{algebras} for a more detailed discussion of weighted
semigroup algebras, even for arbitrary weights).

Since weight functions are submultiplicative, also $\|\DOT\|_w$ is
submultiplicative. The conditions \ref{ita}) and \ref{itb}) ensure that the
spectrum of $\scra_w$ can be identified with the set of all bounded
characters of~$\Lambda$ (see Section~\ref{S4}). They also restrict the growth
of admissible weight functions~$w$ such that in relevant cases, e.g. for
non-decreasing weight functions, we obtain
\begin{equation} \label{bigo}
w(\lambda)\ll e^{\vt\lambda}\quad\text{for every $\vt>0$}
\end{equation}
(see Remark \ref{R3} below).%
\footnote{For $f\colon\Lambda\to\setC$ and $g\colon\Lambda\to\setR_+$
we write $f\ll g$ if
$\sup_{\lambda\in\Lambda}\frac{|f(\lambda)|}{g(\lambda)}<\infty$
(\emph{Vinogradov $\ll$ symbol}).}
Whenever \eqref{bigo} holds, the abscissa of absolute convergence of $\wt{a}$
equals that of $(aw)\wrt$ for any $a\colon\Lambda\to\setC$. Moreover, for
non-decreasing $w\in\scrw$, the remainder term estimate
\begin{equation*}
 \sum_{\substack{\lambda\in\Lambda \\ \lambda\geq\ell}}\,
 |a(\lambda)|\,e^{-\lambda s} = \kleino\Big(\frac{1}{w(\ell)}\Big)
 \qquad (\ell\in\Lambda,\,\ell\to\infty)
\end{equation*}
holds uniformly for $s\in\ol{\setH}$.%
\footnote{For $f\colon\Lambda\to\setC$ and $g\colon\Lambda\to\setR_+$ we
write $f(\lambda)=\kleino(g(\lambda))$ for $\lambda\to\infty$ if
$\lim_{\lambda\to\infty}\frac{f(\lambda)}{g(\lambda)}=0$
(\emph{Landau $\kleino$ symbol}).}%
\footnote{The series is dominated by
$\frac{1}{w(\ell)}\,\sum_{\lambda\geq\ell} |a(\lambda)|\,w(\lambda)$.}
For instance, $w(\lambda)=(\lambda+1)^c$ defines an admissible ascending
weight function for any $c\geq 0$. Note that for $a\in\scra=\scra_1$ the
derivative
\begin{equation*}
 \wt{a}{\,'}(s)=-\sum_{\lambda\in \Lambda}\,
 \lambda\,a(\lambda)\,e^{-\lambda s}
\end{equation*}
of $\wt{a}$ converges absolutely only for $s\in\setH$, in general. But for 
$w(\lambda)=(1+\lambda)^k$, $k\in\setN$, the Dirichlet series $\wt{a}$ of 
$a\in\scra_w$ \emph{and its derivatives up to order $k$} converge absolutely 
for $s\in\ol{\setH}$, and the quality of convergence is described by
\begin{equation*}
\sum_{\substack{\lambda\in\Lambda \\ \lambda\geq\ell}}\,
\lambda^j\,|a(\lambda)|\,e^{-\lambda s} = \kleino\Big((\ell+1)^{j-k}\Big)
\qquad (\ell\in\Lambda,\,\ell\to\infty),
\end{equation*}
uniformly for $s\in\ol{\setH}$ and $j=0,\ldots,k$.

\begin{remark} \label{R2}
Condition \ref{itb}) is equivalent to
$\inf{\big\{\sqrt[k]{w(k\lambda)}:k\in\setN\big\}}=1$.

Obviously \ref{itb}) yields the infimum condition. Conversely, the infimum
condition implies that for every $\eps>0$ there exists $k_0$ such that
$\sqrt[k_0]{w(k_0\lambda)}<1+\eps$. Since every $k\geq k_0$ can be
written as $k=n_k k_0+r_k$ with $0\leq r_k<k_0$, we have
\begin{equation*}
\sqrt[k]{w(k\lambda)} \leq
\sqrt[k]{w(r_k\lambda)}\sqrt[k]{w(k_0\lambda)^{n_k}}.
\end{equation*}
Here the first term on the right tends to $1$ as $k\to\infty$ (as there are
only finitely many choices for $r_k$), and the second term can be estimated
via
\begin{equation*}
\sqrt[k]{w(k_0\lambda)^{n_k}} < \sqrt[k]{(1+\eps)^{k_0n_k}} =
(1+\eps)^{k_0 n_k/k}\leq 1+\eps.
\end{equation*}
\end{remark}

\begin{remark} \label{R3}
If $w\in\scrw(\Lambda)$ is \emph{slowly decreasing},%
\footnote{A function $f\colon\Lambda\to[0,\infty)$ is called \emph{slowly
decreasing}
if $\liminf{(f(\lambda)-f(\lambda'))}\geq 0$ whenever
$1\leq\lambda/\lambda'\to 1$ and $\lambda\to\infty$.}
then $w(\lambda)\ll e^{\vt\lambda}$ for every $\vt>0$.

To see this, fix some $\lambda_1\in\Lambda\setminus\{0\}$. Then, for $\vt>0$
and $\lambda\in\Lambda$ sufficiently large, there exists $k\in\setN$ such
that $(k-1)\lambda_1<\lambda\leq k\lambda_1$ and
\begin{equation*}
w(\lambda) = w(k\lambda_1)-(w(k\lambda_1)-w(\lambda))
           \leq w(k\lambda_1)+1 \ll e^{\vt\lambda_1 k}+1 \ll e^{\vt\lambda}.
\end{equation*}
\end{remark}

\begin{remark} \label{R4}
In general, conditions \ref{ita}) and \ref{itb}) do \emph{not} imply that
$w(\lambda)\ll e^{\vt\lambda}$ for any $\vt>0$.%
\footnote{This was erroneously stated in \cite{Lu2008}.}
\end{remark}

Prototypes of general Dirichlet series are power series for $\Lambda=\setN_0$,
and ordinary Dirichlet series for $\Lambda=\log{\setN}$.
Both of these additive semigroups admit \emph{free generating sets},%
\footnote{A subset $\emptyset\neq\Beta\subseteq\Lambda$ is called a
\emph{free generating set} of an additive semigroup
$\Lambda\subseteq [0,\infty)$, if every $\lambda\in\Lambda$ has a unique
representation of the form  $\lambda=\sum_{\beta\in \Beta} \nu_\beta\beta$
with $\nu_\beta\in\setN_0$ and $\nu_\beta\not=0$ for only finitely many
$\beta\in\Beta$. Then $\Beta$ is $\setQ$-linearly independent in $\setR$.}
namely $\{1\}$ and $\log{\setP}$, respectively, where
$\setP=\{2,3,5,\ldots\}$ denotes the set of primes.

The purpose of this note is to outline two different proofs of the following

\begin{theorem} \label{T1}
Let $\Lambda\subseteq[0,\infty)$ be an additive semigroup with $0\in\Lambda$.
Then, for $w\in\scrw$, the Banach algebra $\scra_w=\scra_w(\Lambda)$ has the
multiplicative group
\begin{equation*}
 \scra_w^*=\scra_w^*(\Lambda)=\{a\in\scra_w: 0\notin\ol{\wt{a}(\setH)}\}.
\end{equation*}
\end{theorem}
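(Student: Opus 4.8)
The plan is to deduce Theorem~\ref{T1} from the Theorem of Edwards, using the admissibility conditions \itemref{ita} and \itemref{itb} to transfer the inversion statement from $\scra$ down to the smaller algebra $\scra_w$. The inclusion $\scra_w^*\subseteq\{a\in\scra_w:0\notin\ol{\wt a(\setH)}\}$ is the easy direction: if $a\in\scra_w$ is invertible in $\scra_w$, its inverse lies in $\scra_w\subseteq\scra$, so $a\in\scra^*$, and Edwards' theorem gives $0\notin\ol{\wt a(\setH)}$. For the reverse inclusion, suppose $a\in\scra_w$ with $0\notin\ol{\wt a(\setH)}$. By Edwards' theorem $a$ is invertible \emph{in $\scra$}, with inverse $b:=a^{-1}\in\scra$; the whole point is to show $b\in\scra_w$, i.e.\ $\|b\|_w=\sum_\lambda|b(\lambda)|w(\lambda)<\infty$.

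The natural tool for upgrading membership from $\scra$ to $\scra_w$ is a spectral/Gelfand argument. First I would verify the claim made in the text that, under \itemref{ita} and \itemref{itb}, the Gelfand spectrum of $\scra_w$ is identified with the set of bounded characters $\chi\colon\Lambda\to\setC$ (those with $\sup_\lambda|\chi(\lambda)|<\infty$, equivalently $|\chi(\lambda)|\le1$ for all $\lambda$ after normalizing via $\chi(0)=1$): condition \itemref{itb} forces $|\chi(\lambda)|=\lim_k\sqrt[k]{|\chi(k\lambda)|}\le\lim_k\sqrt[k]{w(k\lambda)}=1$ for every character continuous on $\scra_w$, while condition \itemref{ita} guarantees every such bounded character is continuous on $\scra_w$. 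Crucially this spectrum does \emph{not} depend on $w$: it is the same set of bounded characters for every admissible $w$, in particular for $w\equiv1$. Hence $\scra$ and $\scra_w$ have the same maximal ideal space, and a standard argument then shows that if $a\in\scra_w$ is invertible in $\scra$ — equivalently $\wh a$ vanishes nowhere on the common spectrum — then $a$ is already invertible in $\scra_w$. Concretely: the bounded characters separate points, $\wt a(s)=\chi_s(a)$ for the evaluation characters $\chi_s(\lambda)=e^{-\lambda s}$ with $s\in\ol\setH$, and $0\notin\ol{\wt a(\setH)}$ together with the description of the full bounded-character set yields $\wh a\ne0$ everywhere on $\operatorname{Spec}\scra_w$, whence invertibility of $a$ in $\scra_w$.

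I expect the main obstacle to be pinning down exactly \emph{which} bounded characters occur — in particular whether every bounded character of $\Lambda$ lies in the closure of the evaluation characters $\chi_s$, $s\in\setH$, so that the hypothesis $0\notin\ol{\wt a(\setH)}$ really does force $\wh a$ to be nonvanishing on \emph{all} of $\operatorname{Spec}\scra_w$ and not merely on the evaluation part. For semigroups with a free generating set (e.g.\ $\Lambda=\setN_0$ or $\log\setN$) this is transparent, but for a general additive subsemigroup $\Lambda\subseteq[0,\infty)$ one needs a density argument for characters. This is presumably where the two announced proofs diverge: one simply invokes Edwards' theorem (which has already done this work for $w\equiv1$) and then runs the short spectral-permanence argument above, while the alternative proof rebuilds the needed character theory from scratch via the duality of convex cones, the Hahn--Banach/separation theorem, and an extension lemma for semigroup characters — extending a character defined on a sub-cone or on $\operatorname{cone}(\Lambda)$ to a bounded character of $\Lambda$. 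I would present the first route in detail and defer the cone-theoretic machinery (Separation Lemma, Extension Lemma, Representation Lemma, Density Lemma) to the sections that develop it.
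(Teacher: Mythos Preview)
Your proposal is correct and matches the paper's first proof essentially line for line: the Representation Lemma (spectrum of $\scra_w$ = bounded characters of $\Lambda$, independent of $w$) gives $\scra_w^*=\scra_1^*\cap\scra_w$, and then Edwards' theorem finishes. You also correctly anticipate both the content and the role of the second proof, which establishes the Density Lemma (every bounded character is approximated by evaluation characters $\chi_s$) directly via the Extension and Separation Lemmas, thereby reproving Edwards' theorem as the special case $w=1$ rather than invoking it.
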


Theorem \ref{T1} is a weighted inversion theorem of Wiener type for general
Dirichlet series. The necessity of the inversion condition is immediate, for
if $1/\wt{a}(s)$ can be represented by an absolutely convergent Dirichlet
series then it is bounded on $\setH$ and hence $\wt{a}(s)$ must be bounded
away from zero on $\setH$. Note that the inversion condition does not depend
on the weight function $w\in\scrw$.

In Section \ref{S4} we derive the proof of Theorem~\ref{T1} from the Theorem
of Edwards and provide an alternative proof based on an approximation of
multiplicative linear functionals on $\scra_w$ (taking $w=1$, this also gives
a new proof for the Theorem of Edwards). The required Density Lemma is
established step by step in Sections \ref{S5} to \ref{S7}. For discrete
additive semigroups $\Lambda\subseteq [0,\infty)$, the above Theorem \ref{T1}
and the strategy of its proof are described in a preliminary (incomplete)
paper of Lucht and Reifenrath \cite{LR1996}. The Lévy generalization and a
multidimensional Lévy version of Theorem \ref{T1}, Theorems \ref{T3} and
\ref{T5}, are established in Sections \ref{S8}, which also deals with an
arithmetical application (Proposition \ref{P3}).

For further recent studies of weighted convolution algebras of subsemigroups
of $\setR$ (with a different thrust), the reader is referred to \cite{DD2009}.
In the case of groups (rather than semigroups), the admissibility
condition b) is related to symmetry properties of weighted group algebras,
and has been attributed to Gelfand, Naimark, Raikov and \v{S}hilov in some
recent works (see \cite[p.\,796]{FG2003}, \cite[Definition 1.2(a)]{FG2006},
\cite[Definition 3.1(a)]{KM2012}).


\section{Tools from Gelfand's theory} \label{S2}


\noindent
With any commutative Banach algebra $A$ Gelfand's theory associates the
space $\Delta(A)$ of homomorphisms of $A$ to the complex field, i.e. the
nontrivial multiplicative linear functionals $h\colon A\to\setC$. The
supremum norm on $\Delta(A)$ is related to the norm on $A$. This helps to
characterize the invertible elements of $A$ (cf., for instance,
\cite[Theorems 18.3 and 18.17]{Ru1970}):

\begin{theorem} \label{T2}
In a commutative Banach algebra with unity $u$ the following assertions
hold.
\begin{enumeratea}
\item \label{2ita}
If $a\in A$ satisfies $\|a-u\|<1$, then $a$ is invertible in $A$.
\item \label{2itb}
If $a\in A$ and $h\in\Delta(A)$, then $|h(a)|\leq\|a\|$.
\item \label{2itc}
If $a\in A$ is invertible in $A$, then $h(a)\neq 0$ for all $h\in\Delta(A)$,
and vice versa.
\end{enumeratea}
\end{theorem}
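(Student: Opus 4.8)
The plan is to handle the three assertions separately. Parts a) and b) should follow purely from completeness of $A$ and submultiplicativity of the norm, while the nontrivial direction of c) will require the structure theory of commutative Banach algebras.

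For a) I would write $b:=u-a$, note $\|b\|<1$, and form the geometric (Neumann) series $c:=\sum_{n\ge 0}b^n$; it converges in $A$ since $\|b^n\|\le\|b\|^n$ and $A$ is complete. Telescoping the partial sums gives $(u-b)\sum_{n=0}^{N}b^n=u-b^{N+1}$, and $\|b^{N+1}\|\le\|b\|^{N+1}\to 0$, so passing to the limit yields $ac=ca=u$, i.e. $a^{-1}=c$. For b) I would first record that $h(u)=1$ for every $h\in\Delta(A)$: choosing $x$ with $h(x)\ne 0$ (possible since $h$ is nontrivial) and using multiplicativity, $h(x)=h(xu)=h(x)h(u)$ forces $h(u)=1$. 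Then for any $\mu\in\setC$ with $|\mu|>\|a\|$ one has $\|u-\mu^{-1}(\mu u-a)\|=|\mu|^{-1}\|a\|<1$, so $\mu u-a$ is invertible by a); applying $h$ to $(\mu u-a)(\mu u-a)^{-1}=u$ gives $(\mu-h(a))\cdot h((\mu u-a)^{-1})=1$, hence $h(a)\ne\mu$. As this holds for all such $\mu$, we conclude $|h(a)|\le\|a\|$.

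For c), the easy direction is immediate: $ab=u$ forces $h(a)h(b)=h(u)=1$, so $h(a)\ne 0$ for every $h\in\Delta(A)$. For the converse I would argue by contraposition: if $a$ is not invertible then $aA$ is a proper ideal (it cannot contain $u$), hence lies in a maximal ideal $M$; using that $A$ is a commutative unital Banach algebra one gets that $M$ is closed, that $A/M$ is a Banach division algebra, and, by the Gelfand--Mazur theorem, that $A/M\cong\setC$, so the quotient homomorphism followed by this isomorphism yields an $h\in\Delta(A)$ with $h(a)=0$. I expect this last step to be the only real obstacle: it packages Gelfand--Mazur together with the identification of maximal ideals with kernels of characters, which is exactly the content of the cited results in \cite{Ru1970}, so in practice I would simply invoke them rather than reprove them. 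Parts a) and b) involve nothing beyond the estimates above.
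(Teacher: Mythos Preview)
Your argument is correct and is the standard textbook proof of these facts. Note, however, that the paper does not actually supply a proof of this theorem: it is stated as a tool from Gelfand's theory with a reference to \cite[Theorems~18.3 and~18.17]{Ru1970}, and used without further justification. Your Neumann-series argument for a), the spectral-radius style bound for b), and the maximal-ideal/Gelfand--Mazur argument for c) are exactly the proofs one finds in Rudin, so there is nothing to compare beyond observing that you have written out what the paper chose to cite.
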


From \itemref{2itb} we infer that every $h\in\Delta(A)$ is continuous. The
\emph{spectrum} $\sigma(a)$ of $a\in A$ denotes the set of all
$\lambda\in\setC$ such that $a-\lambda u$ is \emph{not} invertible. Then
\itemref{2itc} relates the spectrum $\sigma(a)$ to the space $\Delta(A)$ of
all nontrivial multiplicative linear functionals $h\colon A\to\setC$ by
\begin{equation*}
\sigma(a)=\{h(a):h\in\Delta(A)\}.
\end{equation*}
In particular, $a\in A$ is invertible if and only if $0\notin\sigma(a)$.
This suggests to identify the invertible elements of $A$ by determining all
nontrivial multiplicative linear functionals $h\in\Delta(A)$.

The following lemma taken from Edwards \cite[11.4.5]{Ed1957} (see also Rudin
\cite[Exercise 11.5]{Ru1974}) generalizes Theorem \ref{T2}\,\ref{2itc}) by
replacing the inversion map with a holomorphic function.

\begin{clemma}
Let $a\in A$, and let $f$ be a holomorphic function defined on a region
$G\subseteq\setC$ such that $\sigma(a)\subseteq G$. Then there exists an
element $b\in A$ such that $h(b)=f(h(a))$ for every $h\in\Delta(A)$.
\end{clemma}


\section{Weighted semigroup algebras and their spectra} \label{algebras}


\noindent
Let $(\Lambda,\cdot)$ be an arbitrary semigroup with unit element~$e$.
For the moment, multiplicative notation is used, as~$\Lambda$ need not be
commutative. We call a function $w\colon\Lambda\to (0,\infty)$ a
\emph{weight function} if $w(e)=1$ and~$w$ is submultiplicative, i.e.
$w(\lambda\lambda')\leq w(\lambda)w(\lambda')$ for all
$\lambda,\lambda'\in\Lambda$. Then a Banach algebra $\scra_w(\Lambda)$ can be
defined, which is a weighted analogue of the Banach algebra
$\ell_1(\Lambda)$ introduced by Hewitt and Zuckerman~\cite{HZ1956}
(see \cite{DD2009}; cf.~\cite[p.\,70]{Ne2000} for the case of semigroups with
involution). In particular, the construction applies to additive
subsemigroups of $[0,\infty)$ (as discussed in the introduction), or additive
subsemigroups of $[0,\infty)^r$ (as needed in connection with
multidimensional Dirichlet series).

Recall that the sum $\sum_{i\in I}a_i$ of a family $(a_i)_{i\in I}$ of
numbers $a_i\in [0,\infty)\cup\{\infty\}$ is defined as the supremum of the
sums $\sum_{i\in F}a_i$ over finite index sets $F\subseteq I$. A family
$(a_i)_{i\in I}$ of complex numbers is called \emph{absolutely summable} if
$\sum_{i\in I}|a_i|<\infty$. Then $a_i\neq 0$ for only countably many
$i\in I$, and the net of finite partial sums $\sum_{i\in F}a_i$ converges.
Its limit is denoted by $\sum_{i\in I}a_i$
(cf.~\cite[Chapter V, \S3]{Di1969}, \cite[Chapter III, Exercise 23]{SW1999}).
Now the weighted semigroup algebras are obtained as follows
(\cite{DD2009}; cf.~\cite{Ne2000}):

\begin{proposition} \label{P1}
Let $(\Lambda,\cdot)$ be a semigroup with a unit element and~$w$ be a weight
function on~$\Lambda$. Let $\scra_w(\Lambda)$ be the set of all families
$a=(a(\lambda))_{\lambda\in \Lambda}$ of complex numbers such that
$\|a\|_w:=\sum_{\lambda\in \Lambda}|a(\lambda)|w(\lambda)<\infty$. If
$a,b\in\scra_w(\Lambda)$, then the following holds:
\begin{enumeratea}
\item \label{ita}
For each $\lambda\in\Lambda$ the numbers $a(\lambda')b(\lambda'')$ for
$(\lambda',\lambda'')\in \Lambda\times \Lambda$ with
$\lambda'\lambda''=\lambda$ form an absolutely summable family. Thus
$c(\lambda):=\sum_{\lambda'\lambda''=\lambda}a(\lambda')b(\lambda'')$ is
defined.
\item \label{itb}
The family $a*b:=c$ is in $\scra_w(\Lambda)$.
\end{enumeratea}
The multiplication $*$ makes $(\scra_w(\Lambda),\|\DOT\|_w)$ a unital
Banach algebra.
\end{proposition}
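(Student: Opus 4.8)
The plan is to establish, in this order, the Banach space structure of $\scra_w(\Lambda)$, the two summability statements a) and b) (which at the same time yield submultiplicativity of $\|\DOT\|_w$), and finally bilinearity, associativity, and the unit. Completeness I would get at once: since $w$ takes values in $(0,\infty)$, the map $a\mapsto(a(\lambda)w(\lambda))_{\lambda\in\Lambda}$ is a linear bijection $\scra_w(\Lambda)\to\ell_1(\Lambda)$ which is isometric for $\|\DOT\|_w$ and $\|\DOT\|_1$; as $\ell_1$ of an arbitrary index set is a Banach space, so is $(\scra_w(\Lambda),\|\DOT\|_w)$, and in particular $\|\DOT\|_w$ is a genuine norm.

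For a), I would fix $\lambda\in\Lambda$ and use submultiplicativity in the form $w(\lambda)\le w(\lambda')w(\lambda'')$ whenever $\lambda'\lambda''=\lambda$, giving
\[
 |a(\lambda')b(\lambda'')|\le\frac{1}{w(\lambda)}\big(|a(\lambda')|w(\lambda')\big)\big(|b(\lambda'')|w(\lambda'')\big).
\]
The family $\big((|a(\lambda')|w(\lambda'))(|b(\lambda'')|w(\lambda''))\big)_{(\lambda',\lambda'')\in\Lambda\times\Lambda}$ is absolutely summable with sum $\|a\|_w\|b\|_w$, because an unordered sum over a product index set factors as a product of the two unordered sums; restricting to the pairs with product $\lambda$ only decreases it. Hence $(a(\lambda')b(\lambda''))_{\lambda'\lambda''=\lambda}$ is absolutely summable, its net of finite partial sums converges, $c(\lambda)$ is well defined, and $|c(\lambda)|w(\lambda)\le\|a\|_w\|b\|_w$. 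For b), the same termwise bound gives
\[
 \sum_{\lambda\in\Lambda}|c(\lambda)|w(\lambda)\le\sum_{\lambda\in\Lambda}\ \sum_{\lambda'\lambda''=\lambda}\big(|a(\lambda')|w(\lambda')\big)\big(|b(\lambda'')|w(\lambda'')\big),
\]
and since $(\lambda',\lambda'')\mapsto\lambda'\lambda''$ partitions $\Lambda\times\Lambda$ into its fibres over the points of $\Lambda$, the associativity (partitioning) theorem for summable families rewrites the right-hand side as $\|a\|_w\|b\|_w$. Thus $a*b=c\in\scra_w(\Lambda)$ with $\|a*b\|_w\le\|a\|_w\|b\|_w$.

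It then remains to check that $*$ is bilinear — immediate from the linearity of unordered summation on absolutely summable families — associative, and unital. For associativity I would show that, for each $\lambda$, both $((a*b)*c)(\lambda)$ and $(a*(b*c))(\lambda)$ equal the sum of $a(\lambda_1)b(\lambda_2)c(\lambda_3)$ over all triples with $\lambda_1\lambda_2\lambda_3=\lambda$: that family is absolutely summable (dominated by $\|a\|_w\|b\|_w\|c\|_w/w(\lambda)$ by iterated submultiplicativity), so it may be summed by grouping either as $(\lambda_1\lambda_2)\lambda_3$ or as $\lambda_1(\lambda_2\lambda_3)$, the two groupings sweeping out the same triples by associativity in $\Lambda$; reducing the triple sum to a double sum is once more the partitioning theorem. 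For the unit I would take $u=\ve$ with $\ve(\lambda)=\delta_{e,\lambda}$, note $\|u\|_w=w(e)=1$ so $u\in\scra_w(\Lambda)$, and observe that the only nonzero contribution to $(u*a)(\lambda)$ comes from $\lambda'=e$, for which $\lambda''=\lambda$ is forced, whence $(u*a)(\lambda)=a(\lambda)$, and symmetrically $(a*u)(\lambda)=a(\lambda)$. Hence $u$ is a norm-one unit and $\scra_w(\Lambda)$ is a unital Banach algebra.

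The proof is mostly bookkeeping; the one point I expect to require genuine care — and which is used three times above — is the interchange/partition theorem for unordered sums of absolutely summable families over a product index set and over the fibres of the semigroup multiplication, i.e. making rigorous, in the possibly uncountable setting, that $\sum_{\lambda}\sum_{\lambda'\lambda''=\lambda}$ coincides with $\sum_{(\lambda',\lambda'')\in\Lambda\times\Lambda}$. This is precisely the associativity of summable families recalled just before the statement, applied to non-negative (resp.\ absolutely summable complex) families.
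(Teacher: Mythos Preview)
Your argument is correct and complete: the isometry with $\ell_1(\Lambda)$ for completeness, the submultiplicativity estimate for parts a) and b), and the careful handling of associativity and the unit via the partitioning theorem for absolutely summable families are all sound. Note, however, that the paper itself does not supply a proof of this proposition; it is stated with a reference to \cite{DD2009} (cf.~\cite{Ne2000}) and treated as known background. So there is no in-paper proof to compare against, but your write-up is exactly the standard verification one would expect and matches what is sketched in those references.
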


For the remainder of this section we return to the additive notation and let
$(\Lambda,+)$ be a commutative semigroup with neutral element~$0$. By a
\emph{character} of~$\Lambda$ we mean a homomorphism $\psi$ of
$(\Lambda,+)$ to $(\setC,\CDOT)$ such that $\psi(0)=1$. If $w$ is a weight
function on $\Lambda$, we let $\wh{\Lambda}_w$ be the set of all characters
$\psi$ of $\Lambda$ which are \emph{$w$-bounded} in the sense that
\begin{equation*}
|\psi(\lambda)|\leq w(\lambda)\quad\text{for all $\lambda\in\Lambda$}
\end{equation*}
(cf.~\cite{BC1984} and \cite{Ne2000} for the case of semigroups with
involution).

Complex homomorphisms of $\scra_w=\scra_w(\Lambda)$ and $w$-bounded
characters of $\Lambda$ are closely related. To see this, we use again
Kronecker's $\delta$ to define an element
$\delta_\lambda\colon\Lambda\to\setC$ with
$\mu\mapsto\delta_{\lambda,\mu}$ in $\scra_w$, such that
$\|\delta_\lambda\|_w=w(\lambda)$. If $h\in\Delta(\scra_w)$, then
\begin{equation*}
\psi_h\colon (\Lambda,+)\to (\setC,\cdot),\quad
\psi_h(\lambda):=h(\delta_\lambda)
\end{equation*}
is a homomorphism, for $\delta_0=\ve$ and $\delta_{\lambda+\lambda'}=
\delta_\lambda*\delta_{\lambda'}$. Since
$|\psi_h(\lambda)|=|h(\delta_\lambda)|\leq\|\delta_\lambda\|_w=w(\lambda)$,
the character $\psi_h$ is $w$-bounded. Conversely, let
$\psi\in\wh{\Lambda}_w$. Then the family
$(a(\lambda)\psi(\lambda))_{\lambda\in\Lambda}$ is absolutely summable for
each $a\in \scra_w$, as
\begin{equation} \label{esta}
 \sum_{\lambda\in\Lambda}\,|a(\lambda)\,\psi(\lambda)| \leq
 \sum_{\lambda\in\Lambda}\,|a(\lambda)|\,w(\lambda)=\|a\|_w<\infty.
\end{equation}
We can therefore define a function $h_\psi\colon\scra_w\to\setC$ via
\begin{equation} \label{eq2}
h_\psi(a) := \sum_{\lambda\in\Lambda}\,a(\lambda)\,\psi(\lambda)
\quad\text{for all }a\in\scra_w.
\end{equation}
Then $h_\psi$ is linear and of operator norm $\leq 1$ by \eqref{esta},
hence continuous. In fact, $h_\psi\in\Delta(\scra_w)$.
To see this, it remains to show that $h_\psi(a*b)=h_\psi(a)h_\psi(b)$ for all
$a,b\in\scra_w$. It suffices to assume that $a=\delta_\lambda$ and
$b=\delta_{\lambda'}$ with $\lambda,\lambda'\in \Lambda$ (as such elements
span a dense vector subspace of $\scra_w$). But then
$h_\psi(a*b)=\psi(\lambda\lambda')=\psi(\lambda)\psi(\lambda')
=h_\psi(a)h_\psi(b)$ indeed. We readily deduce:

\begin{proposition}\label{spectrum}
Let $(\Lambda,+)$ be a commutative semigroup with neutral element~$0$,
and $w$ be a weight function on~$\Lambda$. Let $\scra_w=\scra_w(\Lambda)$.
Then the map
\begin{equation}
\Delta(\scra_w)\to\wh{\Lambda}_w,\quad h\mapsto \psi_h
\end{equation}
is a bijection, with inverse $\psi\mapsto h_\psi$.
\end{proposition}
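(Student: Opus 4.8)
The plan is to prove Proposition~\ref{spectrum} by verifying that the two maps $h\mapsto\psi_h$ and $\psi\mapsto h_\psi$ are mutually inverse bijections between $\Delta(\scra_w)$ and $\wh{\Lambda}_w$. Most of the construction has already been carried out in the preceding discussion: we know that $\psi_h\in\wh{\Lambda}_w$ for each $h\in\Delta(\scra_w)$, that $h_\psi\in\Delta(\scra_w)$ for each $\psi\in\wh{\Lambda}_w$, and both assignments are well defined. So the remaining work is purely to check the two composition identities $\psi_{h_\psi}=\psi$ and $h_{\psi_h}=h$.

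First I would compute $\psi_{h_\psi}$ for a given $\psi\in\wh{\Lambda}_w$. By definition $\psi_{h_\psi}(\lambda)=h_\psi(\delta_\lambda)$, and evaluating \eqref{eq2} at $a=\delta_\lambda$ gives $h_\psi(\delta_\lambda)=\sum_{\mu\in\Lambda}\delta_{\lambda,\mu}\psi(\mu)=\psi(\lambda)$. Hence $\psi_{h_\psi}=\psi$ on all of $\Lambda$.

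Next I would check $h_{\psi_h}=h$ for a given $h\in\Delta(\scra_w)$. Both sides are continuous linear functionals on $\scra_w$, so by the density of $\Span\{\delta_\lambda:\lambda\in\Lambda\}$ in $\scra_w$ (noted earlier in the text) it suffices to verify equality on each $\delta_\lambda$. But $h_{\psi_h}(\delta_\lambda)=\psi_h(\lambda)=h(\delta_\lambda)$ by the very definition of $\psi_h$. Therefore $h_{\psi_h}$ and $h$ agree on a dense subspace and, being continuous, coincide everywhere.

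Together these two identities show that $h\mapsto\psi_h$ and $\psi\mapsto h_\psi$ are inverse to one another, which proves the claimed bijection. I do not anticipate a genuine obstacle here: the only mildly delicate point is the justification that it suffices to test the functional identity $h_{\psi_h}=h$ on the spanning elements $\delta_\lambda$, which rests on the continuity of both functionals (from Theorem~\ref{T2}\,\ref{2itb}) or the bound \eqref{esta}) together with the density statement already established in the text.
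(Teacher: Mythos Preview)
Your proof is correct and is exactly the argument the paper has in mind when it writes ``We readily deduce''; the two composition identities $\psi_{h_\psi}=\psi$ and $h_{\psi_h}=h$ are verified just as you do, using the density of $\Span\{\delta_\lambda\}$ and continuity of the functionals already established in the text preceding the proposition.
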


We say that a weight function $w$ on a commutative semigroup $(\Lambda,+)$
is \emph{admissible}, if it satisfies the conditions a) and b) described in
the introduction. We write $\scrw(\Lambda)$ (or simply $\scrw$) for the set
of all admissible weight functions on $\Lambda$.

\begin{lemma}\label{admissible}
Let $w$ be an admissible weight function on a commutative semigroup
$(\Lambda,+)$ with neutral element~$0$. Let $\psi$ be a character of
$\Lambda$. Then $\psi$ is $w$-bounded if and only if $\psi$ is a bounded
function.
\end{lemma}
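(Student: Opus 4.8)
The plan is to prove the two implications separately, the nontrivial direction being that $w$-boundedness of $\psi$ implies boundedness. First I would dispose of the easy direction: if $\psi$ is a bounded function, say $|\psi(\lambda)|\le M$ for all $\lambda\in\Lambda$, then in particular $|\psi(\lambda)|\le M\le\max\{M,1\}$; but actually we want the stronger statement $|\psi(\lambda)|\le w(\lambda)$. Here I would use condition \ref{ita}), namely $w(\lambda)\ge 1$: since $\psi(0)=1$ and $\psi(\lambda)\psi(-\lambda)$-type relations are not available (we are only in a semigroup), I instead argue via powers. For fixed $\lambda$, boundedness gives $|\psi(k\lambda)|=|\psi(\lambda)|^k\le M$ for all $k\in\setN$, which forces $|\psi(\lambda)|\le 1\le w(\lambda)$ (if $|\psi(\lambda)|>1$ the powers would be unbounded). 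So a bounded character is automatically $w$-bounded, for \emph{any} weight $w$ with $w\ge 1$.

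The substantial direction assumes $\psi\in\wh{\Lambda}_w$ and deduces that $\psi$ is bounded. Fix $\lambda\in\Lambda$. Then for every $k\in\setN$ we have
\begin{equation*}
|\psi(\lambda)|^k=|\psi(k\lambda)|\le w(k\lambda),
\end{equation*}
so $|\psi(\lambda)|\le\sqrt[k]{w(k\lambda)}$ for every $k$. Taking the infimum over $k$ and invoking condition \ref{itb}) in the form given by Remark~\ref{R2}, namely $\inf_k\sqrt[k]{w(k\lambda)}=1$, yields $|\psi(\lambda)|\le 1$. Since $\lambda\in\Lambda$ was arbitrary, $\sup_{\lambda\in\Lambda}|\psi(\lambda)|\le 1$, so $\psi$ is bounded (indeed bounded by $1$).

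Combining the two directions, a character $\psi$ is $w$-bounded if and only if it is bounded (and in fact bounded by~$1$). I do not anticipate a serious obstacle here; the only point requiring care is that we work in a semigroup, so inverses $\psi(-\lambda)$ are unavailable and one must pass through $k$-fold sums $k\lambda$ rather than through the value at a single negative argument. This is exactly what admissibility condition \ref{itb}) is designed to handle, and Remark~\ref{R2} has already recast it in the convenient infimum form. One minor subtlety worth a remark in passing: the argument shows every $w$-bounded character has $\sup|\psi|\le 1$, so on an admissible weight the $w$-bounded characters coincide with the contractive characters, independently of the particular $w\in\scrw$; this is consistent with the observation after Theorem~\ref{T1} that the inversion condition does not depend on the weight.
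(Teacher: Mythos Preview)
Your proof is correct and follows essentially the same approach as the paper's: both directions are handled by passing to $k$-fold sums $k\lambda$, using $|\psi(\lambda)|^k=|\psi(k\lambda)|$ together with condition~\ref{itb}) (for $w$-bounded $\Rightarrow$ bounded) and condition~\ref{ita}) (for bounded $\Rightarrow$ $w$-bounded). The only cosmetic difference is that you invoke the infimum reformulation from Remark~\ref{R2}, whereas the paper takes the limit $\sqrt[k]{w(k\lambda)}\to 1$ directly; these are of course equivalent.
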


\begin{proof}
If $\psi$ is $w$-bounded, then $|\psi(\lambda)|=\sqrt[k]{|\psi(kn)|}\leq
\sqrt[k]{w(k\lambda)}\to 1$ as $k\to\infty$, and thus $|\psi(\lambda)|\leq 1$
for each $\lambda\in\Lambda$. Conversely, assume that $\psi$ is bounded, say
$|\psi(\lambda)|\leq C$ with $C>0$. Then
$|\psi(\lambda)|=\sqrt[k]{|\psi(k\lambda)|}\leq \sqrt[k]{C}$
for each $k\in\setN$ and thus $|\psi(\lambda)|\leq 1\leq w(\lambda)$, using
that $\sqrt[k]{C}\to 1$.
\end{proof}

For future use, we write $\setU:=\{z\in\setC:|z|<1\}$. If $\psi$ is a
bounded character, then $\psi$ only takes values in the closed unit disk
$\ol{\setU}$.


\section{Two proofs of Theorem \ref{T1}} \label{S4}


\noindent
According to Theorem \ref{T2} and the definition of the spectrum it suffices
to show that $0\notin\sigma(a)$ for $a\in\scra_w$ or, equivalently,
$h(a)\neq 0$ for all $h\in\Delta(\scra_w)$. To enable this, it is useful
to have a description of functionals $h\in\Delta(\scra_w)$. Combining
Proposition~\ref{spectrum} and Lemma~\ref{admissible} we get the following
lemma (to be found in Lucht and Reifenrath \cite{LR1996} for discrete
$\Lambda\subseteq [0,\infty)$):

\begin{rlemma}
Let $(\Lambda,+)$ be a commutative semigroup with identity~$0$, and $w$ be
an admissible weight function on~$\Lambda$. Then to every
$h\in\Delta(\scra_w)$ there corresponds a bounded character $\psi$ of
$\Lambda$ such that
\begin{equation} \label{eq2'}
h(a) = \sum_{\lambda\in\Lambda}\,a(\lambda)\,\psi(\lambda)
       \quad\text{for all }a\in\scra_w.
\end{equation}
Conversely, every bounded character $\psi\colon\Lambda\to\setC$ determines a
unique $h\in\Delta(\scra_w)$ with \eqref{eq2'}.
\end{rlemma}

First we derive Theorem \ref{T1} from the special case $w=1$, due to Edwards
\cite{Ed1957}.

\begin{proof}[First proof of Theorem \ref{T1}]
Since $\scra_w$ is a Banach subalgebra of $\scra_1$ and the characterization
\eqref{eq2'} is independent of $w$, the Representation Lemma shows that
\begin{equation*}
 \scra^*_w = \bigg\{a\in\scra_w:
 \sum_{\lambda\in\Lambda}\,a(\lambda)\,\psi(\lambda)\neq 0\,
 \text{~for all characters~} \psi \text{~of~} \Lambda\big\}.
\end{equation*}
Thus $\scra^*_w=\scra^*_1\cap\scra_w$. By Edwards' theorem \cite{Ed1957},
\begin{equation*}
 \scra^*_1 =
 \big\{a\in\scra_1: 0\notin\ol{\wt{a}(\setH)}\big\},
\end{equation*}
and the fact that $\wt{a}$ does not depend on $w$, the assertion follows.
\end{proof}

A different option for proving Theorem \ref{T1} without recourse to the
Theorem of Edwards is based on a topological linkage of the image set
$\wt{a}(\setH)$ and the spectrum $\sigma(a)$ of functions $a\in\scra_w$,
namely an approximation of the functions $h\in\Delta(\scra_w)$ by the
functions $h_s\in\Delta(\scra_w)$ associated with the specific characters
$\lambda\mapsto\psi_s(\lambda)=e^{-\lambda s}$ for $s\in\ol{\setH}$.

\begin{dlemma} \label{LII.3.2}
Let $\Lambda\subseteq[0,\infty)$ be an additive semigroup with $0\in\Lambda$,
and let $w\in\scrw$. Then for any $a\in\scra_w$ the set $\wt{a}(\setH)$ is
dense in $\sigma(a)$.
\end{dlemma}

The Density Lemma yields the announced alternative proof of Theorem \ref{T1}.

\begin{proof}[Second proof of Theorem \ref{T1}]
According to Theorem \ref{T2}\,\ref{2itc}) the invertibility of $a$ in
$\scra_w$ follows from $\sigma(a)\subseteq\ol{\wt{a}(\setH)}$ and
$0\notin\ol{\wt{a}(\setH)}$.
\end{proof}

It remains to verify the Density Lemma.


\section{Proof of the Density Lemma, part 1} \label{S5}


\noindent
First we establish a special case of the Density Lemma, assuming, in
addition, that the semigroup $\Lambda$ is \emph{free}, i.e. $\Lambda$ has a
free generating set~$\Beta$.

Let $a\in\scra_w$, $\vt>0$, and $h\in{\Delta}(\scra_w)$. Denote by~$\psi$ the
bounded character of~$\Lambda$ associated with~$h$, as in the Representation
Lemma. Then $\psi(\Lambda)\subseteq\ol{\setU}$. In order to show that there
exists an $s\in\setH$ satisfying
\begin{equation*}
|h_s(a)-h(a)|<3\vt,
\end{equation*}
it suffices to verify that, for any finite subset $\Gamma\subseteq \Lambda$,
the estimate
\begin{equation} \label{eq5}
\bigg|\sum_{\lambda\in\Gamma}\,a(\lambda)\,e^{-\lambda s} -
      \sum_{\lambda\in\Gamma}\,a(\lambda)\,\psi(\lambda)\bigg|<\vt
\end{equation}
holds with suitably chosen $s\in\setH$. In fact, there exists a finite subset
$\Gamma\subseteq\Lambda$ such that
$\sum_{\lambda\in\Lambda\setminus\Gamma}w(\lambda)|a(\lambda)|<\vt$
so that
\begin{equation*}
 \bigg|\sum_{\lambda\in\Lambda\setminus\Gamma} a(\lambda)\,e^{-\lambda s}
 -\sum_{\lambda\in\Lambda\setminus\Gamma} a(\lambda)\,\psi(\lambda)\bigg|
 \leq 2\,\sum_{\lambda\in \Lambda\setminus \Gamma}\,|a(\lambda)|\leq 2\vt
\end{equation*}
for each $s\in\setH$, from which the assertion follows.

Let $\frak{b}=(\beta_1,\ldots,\beta_k)$ consist of generators
$0<\beta_1,\ldots,\beta_k\in\Beta$ such that every $\lambda\in\Gamma$
can be expressed in the form
\begin{equation}\label{inspan}
\lambda=\sum_\kappa\,\nu_\kappa\beta_\kappa
\end{equation}
with $\nu_\kappa\in\setN_0$ for $1\leq\kappa\leq k$. Then
\begin{equation*}
P(\frak{z})=\sum_{\lambda\in\Gamma}\,a(\lambda)\,e^{-\lambda s}
\end{equation*}
can be regarded as a polynomial of $\frak{z}=(z_1,\ldots,z_k)\in\ol{\setU}^k$
with variables $z_\kappa=e^{-\beta_\kappa s}$ for $1\leq\kappa\leq k$. Now
\eqref{inspan} leads to
\begin{equation*}
\psi(\lambda)=\prod_{1\leq\kappa\leq k}\,\psi(\beta_\kappa)^{\nu_\kappa}
\end{equation*}
and
\begin{equation*}
\sum_{\lambda\in\Gamma}\,a(\lambda)\,\psi(\lambda) =
P\big(\psi(\beta_1),\ldots,\psi(\beta_k)\big).
\end{equation*}
Since $|\psi(\beta_\kappa)|\leq 1$ for all $\kappa$, the following
generalized version of a lemma of Spilker and Schwarz
\cite[Hilfssatz 5.1]{SS1979} (see also Hewitt and Williamson
\cite[Lemma 2]{HW1957}) yields the existence of $s\in\setH$ with \eqref{eq5},
which establishes the Density Lemma as well as Theorem \ref{T1} in the
special case of free semigroups $\Lambda$.

\begin{lemma} \label{L1}
For $k\in\setN$ let $\frak{v}\colon\ol{\setH}\to\ol{\setU}^k$ be defined by
\begin{equation*}
\frak{v}(s)=e^{-\frak{b} s}:=(e^{-\beta_1 s},\ldots,e^{-\beta_k s})
\end{equation*}
with $\setQ$-linearly independent numbers $\beta_\kappa>0$ for
$1\leq\kappa\leq k$. If $f\colon\ol{\setU}^k\to\setC$ is continuous and
holomorphic on $\setU^k$, then $f(\frak{v}(\ol{\setH}))$ is a dense
subset of $f(\ol{\setU}^k)$.
\end{lemma}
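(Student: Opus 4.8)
The statement concerns the image of the curve $\frak v(\ol\setH)=\{e^{-\frak b s}:s\in\ol\setH\}$ under a function $f$ continuous on $\ol\setU^k$ and holomorphic on $\setU^k$. The key observation is that the closure of $\frak v(\ol\setH)$ is all of $\ol\setU^k$. This is where the $\setQ$-linear independence of $\beta_1,\dots,\beta_k$ enters, via a Kronecker-type simultaneous-approximation argument: writing $s=\sigma+it$ with $\sigma\ge 0$, the point $\frak v(s)$ has modulus vector $(e^{-\beta_1\sigma},\dots,e^{-\beta_k\sigma})$ and argument vector $(-\beta_1 t,\dots,-\beta_k t)\bmod 2\pi$. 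First I would fix a target point $\frak z_0\in\setU^k$ with $z_{0,\kappa}=r_\kappa e^{i\theta_\kappa}$, $0<r_\kappa<1$. Choosing $\sigma>0$ appropriately handles the moduli only if all $r_\kappa$ equal a common power $e^{-\sigma}$; since they need not, one instead approximates: pick $\sigma$ so that the set $\{(e^{-\beta_1\sigma'},\dots,e^{-\beta_k\sigma'}):\sigma'\ge 0\}$ comes close — but this one-parameter modulus curve is itself only one-dimensional, so exact hitting of $\frak z_0$ is impossible. Hence one must work with $f\circ\frak v$ directly rather than with $\frak v$ alone, or show density of the larger set.

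\textbf{The right route.} Let $D:=\ol{\frak v(\ol\setH)}\subseteq\ol\setU^k$. I claim $D$ contains the distinguished boundary torus $\partial_0:=\{|z_1|=\dots=|z_k|=1\}$: indeed for $s=it$, $\frak v(it)=(e^{-i\beta_1 t},\dots,e^{-i\beta_k t})$, and by Kronecker's theorem on simultaneous approximation (using $\beta_1,\dots,\beta_k$ rationally independent, equivalently $1,\beta_1/\beta_k,\dots$ suitably independent — more precisely, $\{(\beta_1 t,\dots,\beta_k t)\bmod 2\pi:t\in\setR\}$ is dense in $(\setR/2\pi\setZ)^k$ exactly when $\beta_1,\dots,\beta_k$ are $\setQ$-linearly independent) the orbit is dense in the torus. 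So $\partial_0\subseteq D$. Now for any continuous $f$ on $\ol\setU^k$ holomorphic on the interior, the maximum principle gives $\sup_{\ol\setU^k}|f| = \sup_{\partial_0}|f|$; more is true — $f(\ol\setU^k)\subseteq\ol{f(\partial_0)}$. To see the latter: for fixed $f$ and any $\frak z_1\in\setU^k$, consider $g(\zeta):=f(\zeta z_{1,1}/|z_{1,1}|,\dots)$ or, cleaner, apply the maximum principle to $f$ restricted to polydiscs $\{|z_\kappa|\le\rho\}$ and let $\rho\uparrow 1$, combined with an iterated one-variable argument: holding all but one coordinate fixed on the circle $|z_\kappa|=1$ and using that a continuous function on $\ol\setU$ holomorphic on $\setU$ maps $\ol\setU$ into $\ol{f(\partial\setU)}$ in each variable separately. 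Chaining these $k$ one-variable facts yields $f(\ol\setU^k)\subseteq\ol{f(\partial_0)}\subseteq\ol{f(D)}=\ol{f(\frak v(\ol\setH))}$, and the reverse inclusion $f(\frak v(\ol\setH))\subseteq f(\ol\setU^k)$ is trivial, giving density.

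\textbf{Steps in order.} (1) Record that $\frak v$ maps $\ol\setH$ into $\ol\setU^k$ and is continuous. (2) Prove $\partial_0\subseteq\ol{\frak v(\ol\setH)}$ by Kronecker's simultaneous approximation theorem, citing $\setQ$-linear independence of the $\beta_\kappa$; this pins down the arguments via the imaginary part of $s$, with real part $0$. (3) Prove the polydisc "boundary-determines-range" fact $f(\ol\setU^k)\subseteq\ol{f(\partial_0\setU^k)}$ by an induction on $k$: the base case $k=1$ is the statement that a function continuous on $\ol\setU$ and holomorphic on $\setU$ has $f(\ol\setU)\subseteq\ol{f(\partial\setU)}$, which follows from the maximum modulus principle applied to $\zeta\mapsto 1/(f(\zeta)-c)$ for $c\notin\ol{f(\partial\setU)}$ together with continuity up to the boundary; the inductive step freezes the last variable on $|z_k|=1$ after approximating it. (4) Combine (2) and (3): $f(\ol\setU^k)\subseteq\ol{f(\partial_0\setU^k)}\subseteq\ol{f(\frak v(\ol\setH))}$, and since $\frak v(\ol\setH)\subseteq\ol\setU^k$ the opposite containment $f(\frak v(\ol\setH))\subseteq f(\ol\setU^k)$ holds, so the two sets have the same closure; density follows.

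\textbf{Main obstacle.} The delicate point is step (3), the passage from the one-variable maximum-principle fact to the polydisc: one must be careful that "holomorphic on $\setU^k$ and continuous on $\ol\setU^k$" does not immediately give, for a fixed point with some coordinates of modulus $<1$, a slice on which the one-variable result applies with boundary exactly $\partial\setU$ — the honest argument fixes all but one coordinate \emph{on the torus} $\partial_0$ and then needs an approximation to pass from interior points to torus points in those coordinates, which is itself an application of the $k-1$ case plus continuity. Writing this induction cleanly, rather than the Kronecker step (which is standard), is the real work; everything else is routine.
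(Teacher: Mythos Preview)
Your step~(3) is false, and the gap is not repairable along the lines you suggest. The claim $f(\ol{\setU}^k)\subseteq\ol{f(\partial_0)}$ already fails for $k=1$: take $f(z)=z$. Then $f(\ol\setU)=\ol\setU$ while $f(\partial\setU)=\partial\setU$ is closed, so $0\in f(\ol\setU)\setminus\ol{f(\partial\setU)}$. Your proposed justification---apply the maximum principle to $\zeta\mapsto 1/(f(\zeta)-c)$ for $c\notin\ol{f(\partial\setU)}$---breaks exactly here: the hypothesis only tells you $f-c$ is bounded away from~$0$ \emph{on the boundary}, not on the interior, so $1/(f-c)$ need not be holomorphic on~$\setU$. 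The maximum modulus principle controls $|f|$, not the range of~$f$; by the open mapping theorem, the interior of a disc typically maps onto an open set whose boundary is $f(\partial\setU)$, so interior values are precisely those \emph{not} approximated from the boundary circle.

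Because of this, restricting attention to $\sigma=0$ (the torus $\partial_0$) throws away essential information. The paper's proof uses the full curve: by Kronecker, for \emph{every} $\sigma\geq 0$ the set $\frak v(\sigma+i\setR)$ is dense in the poly-circle $e^{-\frak b\sigma}\partial\setU$, so if $g=f-c$ satisfied $|g|\geq\vt$ on $\frak v(\ol\setH)$ it would satisfy $|g|\geq\vt$ on all these nested poly-circles. One then argues by contradiction using the parameter~$\sigma$: let $\sigma_0$ be the infimum of those $\sigma$ for which $|g|\geq\vt/2$ on the full polydisc $e^{-\frak b\sigma}\ol\setU$. On $e^{-\frak b\sigma_0}\ol\setU$ the function $g$ has no zero, so $1/g$ \emph{is} holomorphic there, and now the maximum principle applied to $1/g$ (iterated in each variable, with distinguished boundary the poly-circle $e^{-\frak b\sigma_0}\partial\setU$) forces $|g|\geq\vt$ on the whole polydisc---contradicting the definition of $\sigma_0$ if $\sigma_0>0$, and contradicting $c\in f(\ol\setU^k)$ if $\sigma_0=0$. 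The point is that the $1/(f-c)$ trick only becomes legitimate once you already know $g\neq 0$ on a full polydisc, and the continuity-in-$\sigma$ device is what manufactures such a polydisc.
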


\begin{proof}
The assertion is trivial for constant functions
$f\colon\ol{\setU}^k\to\setC$.

Let $f$ be nonconstant. It suffices to show that for any $\vt>0$ there is
some $s\in\ol{\setH}$ such that $|g(\frak{v}(s))|<\vt$, where
$g(\frak{z})=f(\frak{z})-c$ for $\frak{z}=(z_1,\ldots,z_k)\in\ol{\setU}^k$
with an arbitrary $c\in f(\ol{\setU}^k)$. Suppose, to the contrary, that
there is some $\vt>0$ such that
\begin{equation} \label{eq6}
|g(\frak{z})| \geq \vt \quad\text{for all }\frak{z}\in\frak{v}(\ol{\setH}).
\end{equation}

First we show that for any $\vt>0$ there is some $t\in\setR$ such that
\begin{equation} \label{eq7}
\big|z_\kappa-e^{-\beta_\kappa(\sigma+it)}\big|<\vt
 \qquad (\kappa=1,\ldots,k),
\end{equation}
if $|z_\kappa|=e^{-\beta_\kappa \sigma}$ for $\kappa=1,\ldots,k$. It suffices
to verify \eqref{eq7} for $\sigma=0$ (i.e., $|z_k|=1$)\,: The Kronecker
approximation theorem (cf. Hardy and Wright \cite[Theorem 444]{HW2008})
applied to the $\setQ$-linearly independent set $\{\beta_1,\ldots,\beta_k\}$
entails that for any $\vt>0$ there exist numbers $t\in\setR$ and
$m_1,\ldots,m_k\in\setZ$ satisfying
\begin{equation*}
\Big|\frac{t}{2\pi}\,\beta_\kappa-m_\kappa-\frac{\arg{z_\kappa}}{2\pi}\Big|
 < \frac{\vt}{2} \qquad(\kappa=1,\ldots,k).
\end{equation*}
For $\vt>0$ sufficiently small, it follows that
\begin{equation*}
 \big|e^{i\arg{z_\kappa}}-e^{it\beta_\kappa}\big| < \vt,
 \qquad(\kappa=1,\ldots,k),
\end{equation*}
as asserted. Hence, for any $\sigma\geq 0$, $\frak{v}(\sigma+i\setR)$ is a
dense subset of the poly-circle
\begin{equation*}
e^{-\frak{b}\sigma}\pa{\setU} :=
 \{\frak{z}\in\setC^k:|z_\kappa|=e^{-\beta_\kappa\sigma}~\text{ for }~
 1\leq\kappa\leq k\},
\end{equation*}
and
\begin{equation} \label{eq8}
 |g(\frak{z})|\geq\vt \quad \text{for all }\,
 \frak{z}\in e^{-\frak{b}\sigma}\pa{\setU}.
\end{equation}

As $\frak{o}=(0,\ldots,0)\in\ol{\setU}^k$ is limit point of
$\frak{v}(\ol{\setH})$, we have $|g(\frak{o})|\geq\vt$. Therefore we may
define
\begin{equation*}
\sigma_0
 = \inf{\big\{\sigma\geq 0: |g(\frak{z})|\geq\ts\frac{1}{2}\,\vt
   \text{ for all }\frak{z}\in e^{-\frak{b}\sigma}\ol{\setU}\big\}},
\end{equation*}
where $e^{-\frak{b}\sigma}\ol{\setU}:=e^{-\beta_1 \sigma}\ol{\setU}\times
\cdots\times e^{-\beta_k \sigma}\ol{\setU}$.
Then $g$ has no zero in the compact poly-disc
$e^{-\frak{b}\sigma_0}\ol{\setU}$, and $1/g$ represents a continuous
function on $e^{-\frak{b}\sigma_0}\ol{\setU}$ that is holomorphic on
$e^{-\frak{b}\sigma_0}\setU$. By applying the maximum principle and the
Weierstraß convergence theorem for holomorphic functions to each component
of the poly-disc $e^{-\frak{b}\sigma_0}\ol{\setU}$, we obtain from
\eqref{eq8} that
\begin{equation*}
 \max{\bigg\{\Big|\frac{1}{g(\frak{z})}\Big| :
 \frak{z}\in e^{-\frak{b}\sigma_0}\ol{\setU}\bigg\}} =
 \max{\bigg\{\Big|\frac{1}{g(\frak{z})}\Big| :
 \frak{z}\in e^{-\frak{b}\sigma_0}\pa{\setU}\bigg\}} \leq \frac{1}{\vt}\,.
\end{equation*}
Hence
$|g(\frak{z})|\geq\vt$ for all $\frak{z}\in e^{-\frak{b}\sigma_0}\ol{\setU}$.

This gives the desired contradiction, as $e^{-\frak{b}\sigma_0}\ol{\setU}$
contains some point $\frak{z}$ with $|g(\frak{z})|<\vt$, in case of
$\sigma_0>0$ by the definition of $\sigma_0$, and in case of $\sigma_0=0$ by
$c\in f(\ol{\setU}^k)$.
\end{proof}


\section{Proof of the Density Lemma, part 2} \label{S6}


\noindent
To complete the proof of the Density Lemma we have to remove the assumption
on $\Lambda$ to be free. As in the preceding section, for $\vt>0$ and a
finite subset $\Gamma\subseteq\Lambda$, we merely need to find $s\in\setH$
with \eqref{eq5}. Since only the values of $\psi$ on~$\Gamma$ enter
\eqref{eq5}, the next lemma allows $\Lambda$ to be replaced with a free
semigroup $[\Beta]$, to which the special case from Section~\ref{S5} applies.

We shall use the following terminology and facts concerning convex cones:
A subset~$C$ of a finite-dimensional real vector space~$W$ is called a
\emph{convex cone} if $C$ is convex and $[0,\infty)\cdot C\subseteq C$.
Then~$C$ is a semigroup under addition. If $C\not=\emptyset$, then $C-C$
is a vector subspace of~$W$ and $C$ has non-empty interior in~$C-C$
(cf. \cite[Proposition~V.1.4\,(ii)]{Ne2000}).
The \emph{dimension of~$C$} is defined as the dimension of $C-C$.
A convex cone $F\subseteq C$ is called a \emph{face of $C$} if $x+y\in F$
for elements $x,y\in C$ implies $x,y\in F$. A face of the form $[0,\infty)x$
with $x\not=0$ is called an \emph{extreme ray} of~$C$.
A convex cone $C\subseteq W$ is called \emph{polyhedral} if it is generated
by a finite set $E\subseteq W$ (i.e., $C$ is of the form~(\ref{eqcone}) 
below).%
\footnote{See \cite[Chapter V.1]{Ne2000} for basic facts on polyhedral cones,
as first spelled out in \cite{We1936}.}
Then every face $F\subseteq C$ is generated by $F\cap E$, as is well-known
(cf. \cite[Theorems~7.2 and 7.3]{Br1983}).%
\footnote{Omitting only a trivial case, consider a non-zero element $x\in F$.
Then $x=r_1x_1+\cdots+r_kx_k$ with elements $x_1,\ldots, x_k\in E$ and
$r_1,\ldots, r_k>0$. Since $F$ is a face, it follows that
$r_1x_1,\ldots, r_kx_k\in F$ and hence $x_1,\ldots, x_k\in F$.}
In particular, every extreme ray of~$C$ is of the form $[0,\infty)x$ with
some $x\in E$.

\begin{elemma}
Let $\psi\colon\Lambda\to\setC$ be a bounded character of an additive
semigroup $\Lambda\subseteq[0,\infty)$ with $0\in\Lambda$, and let
$\Gamma\subseteq\Lambda$ be a finite subset. Then there exists a
$\setQ$-linearly independent set
$\Beta=\{\beta_1,\ldots,\beta_k\}\subseteq(0,\infty)$ such that
\begin{equation*}
\Gamma\subseteq [\Beta]:=\setN_0\beta_1+\cdots+\setN_0\beta_k,
\end{equation*}
and a bounded character $\phi\colon[\Beta]\to\setC$ which coincides with $\psi$
on~$\Gamma$.
\end{elemma}

The proof requires some notional arrangements.

Let $\setQ^+=\{q\in\setQ:q>0\}$ and $\setQ^+_0=\setQ^+\cup\{0\}$. Given a
$\setQ$-vector space $V$,
we let $V_\setR=\setR\otimes_\setQ V$ be the $\setR$-vector space%
\footnote{with the $\setR$-basis $\{1\}\otimes B$ where $B$ is a
$\setQ$-basis of $V$.}
associated with the $\setQ$-vector space $V$
(cf. \cite[Chapt.\,XVI,\,\S\,4]{La2002}). If $E=\{x_1,\ldots,x_k\}$ is a
finite subset of $V$, we write $[E]:=\setN_0 x_1+\cdots+\setN_0 x_k$ for
the submonoid of $(V,+)$ generated by $E$,
\begin{equation*}
 \conv_\setQ(E):= \big\{q_1 x_1+\cdots+q_k x_k:
 q_1,\ldots,q_k\in\setQ_0^+,~q_1+\cdots+q_k=1\big\}
\end{equation*}
for its rational convex hull, $\conv_\setR(E)$ for its usual real convex
hull, and
\begin{equation}\label{eqcone}
 \cone_\setR(E):= [0,\infty)\cdot\conv_\setR(E)
\end{equation}
for the convex cone generated by $E$.

The dual space $V^*$ of a $\setQ$-vector space $V$ consists of all
$\setQ$-linear functionals from $V$ to $\setQ$. Given a subset
$T\subseteq V$, we define its \emph{rational dual cone} by
\begin{equation*}
T^\star:=\{\rho\in V^*:\rho(T)\subseteq[0,\infty)\}.
\end{equation*}
Then, by identifying $V$ with $(V^*)^*$,
$T\subseteq (T^\star)^\star$. If $T\subseteq U$, then
$T^\star\supseteq U^\star$. If $v_1,\ldots,v_k\in V$ is a basis and
$v_1^*,\ldots,v_k^*\in V^*$ its dual basis, $v_i^*(v_j)=\delta_{ij}$, then
\begin{equation} \label{eq9}
\{v_1,\ldots,v_k\}^\star={\setQ_0^+}v_1^*+\cdots+{\setQ_0^+}v_k^*.
\end{equation}
We let $V^*_\setR$ be the real dual space of $V_\setR$ and define the
\emph{real dual cone}
\begin{equation*}
T_\setR^\star:=\{\rho\in V^*_\setR:\rho(T)\subseteq[0,\infty)\}
\end{equation*}
of a subset $T\subseteq V_\setR$. As usual, we identify $V^*_\setR$ with
$(V^*)_\setR$. For $T$ finite and $\rho\in V^*_\setR$, $\rho$ is in the
interior of $T_\setR^\star$ if and only if $\rho(t)>0$ for all $t\in T$,
$t\neq 0$.

\begin{proof}[Proof of the Extension Lemma]
We may assume that $0\notin\Gamma\neq\emptyset$ and, after shrinking
$\Lambda$ to $[\Gamma]$, that $\Lambda=[\Gamma]$. Let $V=\Span_\setQ(\Gamma)$
denote the $\setQ$-linear space spanned by $\Gamma$, and define
\begin{equation*}
 \Gamma' :=\{\gamma\in\Gamma:\psi(\gamma)\neq 0\},\quad
 \Gamma_0:=\{\gamma\in\Gamma:\psi(\gamma)=0\},
\end{equation*}
and $V':=\Span_\setQ(\Gamma')$.

\begin{step} \label{step1}
Write $\psi_1:=|\psi|$ and $\psi_2(\xi):=\psi(\xi)/\psi_1(\xi)$ for
$\xi\in[\Gamma']$. By the theorem in Ross \cite{Ro1961}, $\psi_2$ extends to
a character $\vp_2\colon (V,+)\to(\setC,\,\cdot\,)$ with values in the circle
group $\pa{\setU}$. If we can extend $\psi_1$ to a bounded character $\vp_1$
on $[B]$ for suitable $B$, then $\vp_1\vp_2$ extends $\psi$. Hence
$\psi(\Gamma)\subseteq[0,\infty)$ without loss of generality.
\end{step}

\begin{step} \label{step2}
Since $(0,\infty)$ is a divisible, torsion-free abelian group under
multiplication, the homomorphism
$\psi|_{[\Gamma']}\colon [\Gamma']\to (0,\infty)$ extends uniquely to a
homomorphism of groups $\vt\colon V'\to (0,\infty)$ (cf. \cite[A7]{HR1979}).%
\footnote{In a first step, extend $\psi|_{[\Beta']}$ to a group homomorphism
$[\Beta']-[\Beta']\to (0,\infty)$ via
$\xi_1-\xi_2\mapsto \psi(\xi_1)/\psi(\xi_2)$.}
Then $-\ln\circ\,\vt\colon V'\to\setR$ is a $\setQ$-linear map and thus
extends uniquely to an $\setR$-linear functional
$\rho\colon V'_\setR\to\setR$. Note that
\begin{equation*}
\vt(\xi)=e^{-\rho(\xi)} \quad\text{for all }\,\xi\in V'
\end{equation*}
by construction of $\rho$. Since $|\vt(\xi)|=|\psi(\xi)|\leq 1$, we have
$\rho(\xi)\geq 0$ for all $\xi\in\Gamma'$ and thus
$\rho\in (\Gamma')_\setR^\star$.
\end{step}

\begin{step} \label{step3}
We claim that
\begin{equation} \label{eq10}
\conv_\setQ(\Gamma_0)\cap V'=\emptyset.
\end{equation}
Suppose, to the contrary, that
$\eta\in\conv_\setQ(\Gamma_0)\cap V'\neq\emptyset$. Then there exist numbers
\mbox{$k,\ell,t\in\setN$} with $k\leq\ell$, elements
$\eta_1,\ldots,\eta_t\in\Gamma_0$, $\xi_1,\ldots,\xi_\ell\in\Gamma'$ and
coefficients $q_1,\ldots,q_t,r_1,\ldots,r_\ell\in\setQ^+$ such that
\begin{equation*}
 0\neq\eta=\sum_{1\leq\tau\leq t}\,q_\tau\,\eta_\tau=
 \sum_{1\leq\lambda\leq k}\,r_\lambda\,\xi_\lambda -
 \sum_{k<\lambda\leq \ell}\,r_\lambda\,\xi_\lambda.
\end{equation*}
By multiplying the equation with the common denominator $a\in\setN$ of the
rational coefficients $q_\tau$, $r_\lambda$ for $\tau\leq t$,
$\lambda\leq\ell$, we obtain
\begin{equation*}
 a\,\eta+\sum_{k<\lambda\leq \ell}\,b_\lambda\,\xi_\lambda=
 \sum_{1\leq\lambda\leq k}\,b_\lambda\,\xi_\lambda
\end{equation*}
with certain coefficients $b_\lambda\in\setN$ and $0\not=a\eta\in[\Gamma_0]$.
Then
\begin{equation*}
 \psi(a\eta)\cdot \psi(\xi_{k+1})^{b_{k+1}}\cdots\psi(\xi_\ell)^{b_\ell}=
 \psi(\xi_1)^{b_1}\cdots\psi(\xi_k)^{b_k},
\end{equation*}
which is a contradiction since $\psi(a\eta)=0$ but
$\psi(\xi_\lambda)\neq 0$ for $1\leq\lambda\leq k$.
\end{step}

\begin{step} \label{step4}
Let $U:=V/V'$ and $\pi\colon V\to U$ be the quotient map. From Step~3 we
know that
$0\notin\pi{(\conv_\setQ{(\Gamma_0)})} = \conv_\setQ{(\pi{(\Gamma_0)})}$.
Hence the Separation Lemma (see Section \ref{S7}) provides a
$\setQ$-linear functional $\chi\in U^*$ such that $\chi(\alpha)>0$ for
each $\alpha\in\pi(\Gamma_0)$. Then, with $\theta:=\chi\circ\pi\in V^*$,
we obtain
\begin{equation} \label{eq11}
 \theta(\alpha)>0\,\text{ for each }\,\alpha\in\Gamma_0\,\text{ and }\,
 \theta|_{\Gamma'}=0,
\end{equation}
whence $\theta\in\Gamma^\star$ in particular.
\end{step}

\begin{step} \label{step5}
Let $\rho'\in V^*_\setR$ be any real functional such that
$\rho'|_{V'_\setR}=\rho$. We choose $c>0$ so large that
$\zeta:=\rho'+c\theta\in\Gamma^\star_\setR$. This is possible since
\begin{equation} \label{eq12}
\zeta|_{\Gamma'}=\rho'|_{\Gamma'}\geq 0
\end{equation}
(as $\theta$ vanishes on $\Gamma'$), and furthermore
\begin{equation*}
\zeta(\alpha)=\rho'(\alpha)+c\theta(\alpha)
\end{equation*}
for $\alpha\in\Gamma_0$, which can be made arbitrarily large since
$\theta(\alpha)>0$.
\end{step}

\begin{step} \label{step6}
We have $0\notin\conv_\setR(\Gamma)$ in $V_\setR$, because $\Gamma$ is a
subset of the convex set $(0,\infty)$. Hence $P:=\cone_\setR(\Gamma)$ is a
pointed%
\footnote{That means, $P$ does not contain lines.}
polyhedral cone in $V_\setR$ whose extreme rays are of the form
$[0,\infty)\alpha$ for certain $\alpha\in\Gamma$ (as recalled above).
Since $\Span_\setR(\Gamma)=V_\setR$, the cone $P$ has non-empty interior.
Hence also $P^\star_\setR=\Gamma^\star_\setR\subseteq V^*_\setR$ is a pointed
polyhedral cone with non-empty interior (see Neeb
\cite[Propositions V.1.5 (ii) and V.1.21]{Ne2000}). It is known from the
theory of polyhedral cones in real vector spaces that the extreme rays of
$P^\star_\setR$ are of the form $[0,\infty)\alpha$ for a functional
$\alpha\in V^*_\setR$ such that $F:=\ker{\alpha}\cap P$ is a codimension~$1$
face of~$P$ (see, e.g. \cite[Theorem~3]{Ta1976}). We claim that
\emph{$\alpha$ can be chosen in $V^*$.}

To see this, recall first that $F=\cone_\setR(\Gamma'')$ for some subset
$\Gamma''\subseteq\Gamma$, and $F$ has non-empty interior in $\ker{\alpha}$.
Hence $\ker{\alpha}=\Span_\setR(F)=(\Span_\setQ(\Gamma''))_\setR$ is defined
over $\setQ$. After replacing $\alpha$ with a positive real multiple to
ensure that $\alpha(V)\subseteq\setQ$, we have $\alpha\in V^*$, as
desired.

Consequently, $F\cap V^*$ is dense in $F$ for each face $F$ of
$P^\star_\setR$. Furthermore, $\Span_\setQ(\Gamma^\star)=V^*$.
\end{step}

\begin{step}\label{step7}
Let $F\subseteq \Gamma^\star_\setR$ be a face of dimension~$\ell\geq 1$,
and $\algint(F)$ be its interior relative $\aff_\setR(F)=\Span_\setR(F)$. We
show: \emph{For every $\eta\in\algint(F)$ there exists a $\setQ$-basis
$b_1,\ldots,b_\ell\in F\cap V^*$ of $\Span_\setQ(F\cap V^*)$ with
$\eta\in\cone_\setR(b_1,\ldots,b_\ell)$. Moreover, $b_1$ can be chosen as an
arbitrary non-zero vector in $F\cap V^*$.}

In fact, if $\eta=0$, we can simply select a $\setQ$-basis from generators
$\alpha\in V^*$ for extreme rays of~$F$, which exist by Step~\ref{step6}
(or extend a given vector~$b_1$ by such vectors to a basis). If $\eta\not=0$
(which we assume now), we proceed by induction on~$\ell$:

If $\ell=1$, then $F=[0,\infty) \alpha$ for some
$\alpha\in\Gamma^\star\subseteq V^*$ (see Step~\ref{step6}). We can now take
$b_1:=\alpha$ (or any prescribed non-zero vector in $F\cap V^*$).

Induction step.
There exists an~$x$ in the interior $P^0$, such that $\eta(x)>0$.
Since~$V$ is dense in~$V_\setR$, we may assume that $x\in V$. After passage
to a positive multiple of~$\eta$, we may also assume that $\eta(x)\in\setQ$.
Then $\gamma(x)>0$ for all $\gamma\in\Gamma^\star_\setR\setminus\{0\}$. Hence
$K:=\{\gamma\in\Gamma^\star_\setR:\gamma(x)=\eta(x)\}$ is a closed convex set
such that $\Gamma^\star_\setR=[0,\infty)K$ and $\eta\in K$. Choose
$\alpha_1,\ldots,\alpha_n\in\Gamma^\star_\setR$ such that
$[0,\infty)\alpha_j$, $j=1,\ldots,n$, are the extreme rays of
$\Gamma^\star_\setR$; after passage to positive multiples, we may assume that
$\alpha_j(x)=\eta(x)$ for all~$j$. If $r_1,\ldots,r_n\geq 0$ and
$\gamma:=r_1\alpha_1+\cdots+ r_n\alpha_n$ satisfies $\gamma(x)=\eta(x)$, then
$\sum_{j=1}^nr_j=1$. Hence $K=\conv_\setR(\alpha_1,\ldots,\alpha_n)$, and
thus~$K$ is compact. If a candidate for~$b_1$ is given, after passing to a
positive rational multiple we may assume that this $b_1$ lies in~$K$.
Otherwise we choose any non-zero element $b_1\in V^*\cap F\cap K$. If
$\eta\in [0,\infty)b_1$, we can use Step~\ref{step6} to extend $b_1$
(using generators of some extreme rays) to a $\setQ$-basis with the desired
properties. Otherwise we find $t>1$ such that $d:=b_1+t(\eta-b_1)$ lies in the
boundary of $F$ relative $\Span_\setR(F)$ (using that~$K$ is compact). Then
$d\in\algint(F')$ for some face $F'$ of~$F$ of dimension $0<m<\ell$.
By induction we find a $\setQ$-basis $b_2,\ldots,b_{m+1}$ of
$\Span_\setQ(F'\cap V^*)$ in $F'\cap V^*$, such that
$d\in\cone_\setR(b_2,\ldots, b_{m+1})$. Note that $b_1\not\in F'$ (otherwise
the convex combination $\eta$ of $b_1$ and $d$ would lie in the proper
face~$F'$ of~$F$, contradicting the assumption that $\eta\in\algint(F)$).
Hence $b_1\not\in F'-F'=\Span_\setR(F')$ (using that $F'$ is a face). Thus
$b_1,b_2,\ldots,b_{m+1}$ are $\setQ$-linearly independent and can be extended
to a $\setQ$-basis $b_1,\ldots, b_\ell\in F\cap V^*$ of
$\Span_\setQ(F\cap V^*)$, using Step~\ref{step6}.
It remains to observe that
$\eta=\frac{1}{t}d+(1-\frac{1}{t})b_1\in\cone_\setR(b_1,\ldots,b_{m+1})
\subseteq \cone_\setR(b_1,\ldots,b_\ell)$.
\end{step}

\begin{step} \label{step8}
If $\zeta=0$, we choose a $\setQ$-basis
$\beta^*_1,\ldots,\beta^*_k\in\Gamma^\star$ of $V^*$ such that
\begin{equation} \label{eq13}
\theta\in\setQ^+_0\beta^*_1+\cdots+\setQ^+_0\beta^*_k,
\end{equation}
which is trivial for $\theta=0$, and can be achieved by taking $\beta^*_1$ as
a positive rational multiple of $\theta$ otherwise. If $\zeta\neq 0$, let
$F$ be the minimal face of $\Gamma^\star_\setR$ containing $\zeta$. Then
$\zeta$ is in the interior of $F$ relative $\Span_\setR(F)$, and $F\cap V^*$
is dense in $F$. If $\theta\in F$, we let $\beta^*_1\in F\cap V^*$ be a
non-zero vector such that $\theta$ is a non-negative rational multiple of
$\beta^*_1$. By Step~\ref{step7}, we can extend $\beta^*_1$ to a
$\setQ$-basis $\beta^*_1,\ldots,\beta^*_\ell\in F\cap V^*$ of
$\Span_\setQ(F\cap V^*)$ such that
\begin{equation} \label{eq14}
\zeta\in\cone_\setR{(\beta^*_1,\ldots,\beta^*_\ell)},
\end{equation}
which in turn we extend to a $\setQ$-basis
$\beta^*_1,\ldots,\beta^*_k\in\Gamma^\star$ of $V^*$. If $\theta\notin F$, we
first find a $\setQ$-basis $\beta^*_1,\ldots,\beta^*_\ell\in F\cap V^*$ of
$\Span_\setQ{(F\cap V^*)}$ such that \eqref{eq14} holds (using
Step~\ref{step7}), and then extend
it to a $\setQ$-basis $\beta^*_1,\ldots,\beta^*_k\in\Gamma^\star$ of $V^*$
such that $\beta^*_{\ell+1}$ is a positive rational multiple of~$\theta$.
This is possible since $\theta\not\in F-F =\Span_\setR(F)$ (as
$\theta\in\Gamma^\star_\setR\setminus F$ and~$F$ is a face).
In either case, $\zeta\in\setR^+_0\beta^*_1+\cdots+\setR^+_0\beta^*_k$, and
\eqref{eq13} holds. Let $\beta_1,\ldots,\beta_k\in V$ be the basis dual to
$\beta^*_1,\ldots,\beta^*_k$, and write $\Beta:=\{\beta_1,\ldots,\beta_k\}$.
Then $\{\beta^*_1,\ldots,\beta^*_k\}\subseteq \Gamma^\star$ entails that
\begin{equation*}
 \setQ^+_0\beta_1+\cdots+\setQ^+_0\beta_k =
 \{\beta^*_1,\ldots,\beta^*_k\}^\star\supseteq\Gamma^{\star\star}
 \supseteq \Gamma.
\end{equation*}
After replacing each $\beta_\kappa$ by a positive rational multiple, we may
assume that $[\Beta]\supseteq\Gamma$.
\end{step}

\begin{step} \label{step9}
Using Kronecker's $\delta$, we define
\begin{equation*}
 \phi\colon[\Beta]\to[0,\infty),\quad
 \phi(\xi):=e^{-\zeta(\xi)}\cdot\delta_{0,\theta(\xi)}.
\end{equation*}
Then $\phi$ is a homomorphism, since $\theta(\Beta)\subseteq[0,\infty)$ by
\eqref{eq13} and $\delta_{0,\CDOT}\colon([0,\infty),+)\to([0,\infty),\cdot)$
is a homomorphism of monoids. If $\xi\in\Gamma'$, then
$\phi(\xi)=e^{-\rho(\xi)}=\psi(\xi)$ by \eqref{eq11} and \eqref{eq12}. If
$\xi\in\Gamma_0$, then $\theta(\xi)>0$ by \eqref{eq11} and thus
$\phi(\xi)=0=\psi(\xi)$. Finally, $\phi$ is bounded, since
$\{\zeta\}\subseteq\setR^+_0\beta^*_1+\cdots+\setR^+_0\beta^*_k$ and thus
$[\Beta]\subseteq\{\beta^*_1,\ldots,\beta^*_k\}^\star
\subseteq\{\zeta\}^\star_\setR$.
\end{step}
This completes the proof of the Density Lemma.
\end{proof}


\section{A Hahn-Banach separation theorem for rational polytopes} \label{S7}


\noindent
The possibility of separation theorems for polytopes in vector spaces over
ordered fields is already mentioned in \cite[p.\,287]{GK1998} (without
proof). The proof of the Extension Lemma, Step 4 of Section \ref{S6},
required the following

\begin{slemma}
Let $V$ be a finite-dimensional $\setQ$-vector space and
\mbox{$E=\{x_1,\ldots,x_m\}\subseteq V$} be a non-empty finite subset such
that $0\notin C:=\conv_\setQ(E)$. Then there exists a $\setQ$-linear
functional $\rho\colon V\to\setQ$ such that $\rho(E)\subseteq (0,\infty)$.
\end{slemma}

\begin{proof}
Let $W\subseteq V$ be the affine subspace generated by $E$. If $0\notin W$,
then there exists $\rho\in V^\star$ such that $\rho|_W=1$ and hence
$\rho|_E=1$. Now assume that $0\in W$. After replacing $V$ with $W$, we may
assume that $V=\aff_\setQ(E)$. We may also assume $V=\setQ^n$ for some $n$.
Then $\aff_\setR(E)=\setR^n$ in $\setR^n$, whence $C_\setR:=\conv_\setR(E)$
has non-empty interior in $V_\setR:=\setR^n$.

We claim that $0\notin C_\setR$.

If this is true, then there is $y\in\setR^n$ such that
$\langle{y,C_\setR}\rangle\subseteq(0,\infty)$ by the Hahn-Banach Separation
Theorem. Then $\langle{y,x_j}\rangle>0$ for $j=1,\ldots,m$. By continuity, we
find $w\in\setQ^n$ close to $y$ such that $\langle{w,x_j}\rangle>0$ for
$j=1,\ldots,m$. Then $\rho:=\langle{w,\CDOT}\rangle\in(\setQ^n)^\star$ is as
desired.

We now prove the claim by induction on $\dim_\setQ(V)$. If $\dim_\setQ(V)=1$,
then $E$ is a finite subset of $\setQ$ and $C=[x_\star,x^\star]\cap\setQ$,
where $x_\star$ and $x^\star$ is the minimum and maximum of $E$,
respectively. Since $0\notin C$, we deduce that
$\{x_\star,x^\star\}\subseteq(0,\infty)$ or
$\{x_\star,x^\star\}\subseteq(-\infty,0)$, entailing that also
$0\notin C_\setR=[x_\star,x^\star]$. Induction step: If $0\in\pa C_\setR$ is
in the boundary, then $0$ is contained in a face $\Phi\neq C_\setR$ of the
polytope~$C_\setR$ (see \cite[Theorem 5.6]{Br1983}). We have
$\Phi=\conv_\setR(E')$ with $E':=\Phi\cap E$,
by \cite[Theorems 7.2 and 7.3]{Br1983}. Then $\aff_\setR(E')$ is a proper
vector subspace of $V_\setR$ (see \cite[Corollary 5.5]{Br1983}), and hence
$\aff_\setQ(E')$ is a proper vector subspace of $V$. By induction,
$0\notin\conv_\setR(E')=\Phi$. We have reached a contradiction.

It remains to discuss the case where $0$ is in the interior of $C_\setR$. For
some $\eps>0$ we then have $w\in C_\setR$ for all
$w=(w_1,\ldots,w_n)\in\{-\eps,\eps\}^n$. Since $C$ is dense in $C_\setR$,
for each $w$ (as before) we find an element $u\in C$ such that
$\|w-u\|_\infty<\frac{\eps}{2}$\,. For each $j\in\{1,\ldots,n\}$, the $j$th
component $u_j$ of $u$ is then non-zero and has the same sign as $w_j$. Let
$U$ be the set of all $u$ as before, for $w$ ranging through
$\{-\eps,\eps\}^n$. Now the next lemma shows that
$0\in\conv_\setQ(U)\subseteq C$, which is a contradiction.
\end{proof}

Here, we used

\begin{lemma} \label{L2}
Let $U\subseteq\setQ^n$ be such that for all signs
$\sigma_1,\ldots,\sigma_n\in\{-1,1\}$, there exists
$u=(u_1,\ldots,u_n)\in U$ with $\sgn{u_j}=\sigma_j$ for each
$j\in\{1,\ldots,n\}$. Then $0\in\conv_\setQ(U)$.
\end{lemma}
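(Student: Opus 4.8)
Lemma \ref{L2} asks us to show that if $U\subseteq\setQ^n$ meets every open orthant (in the sense of sign patterns), then $0$ lies in the rational convex hull of $U$.

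The plan is to argue by induction on $n$, which seems the most natural route given the combinatorial "all sign patterns" hypothesis. For the base case $n=1$, we have some $u_-<0$ and some $u_+>0$ in $U$, and then $\frac{u_+}{u_+-u_-}u_- + \frac{-u_-}{u_+-u_-}u_+ = 0$, a rational convex combination. For the induction step, suppose the claim holds for $n-1$. Pick any coordinate, say the last one, and split $U$ according to the sign of $u_n$: let $U^+=\{u\in U:u_n>0\}$ and $U^-=\{u\in U:u_n<0\}$ (by hypothesis $u_n\neq 0$ for all $u\in U$, since every sign pattern is realized and in particular each coordinate is forced to be nonzero). For each of the $2^{n-1}$ sign patterns $(\sigma_1,\ldots,\sigma_{n-1})$, the hypothesis gives an element of $U^+$ with those signs in the first $n-1$ coordinates and an element of $U^-$ with those signs in the first $n-1$ coordinates. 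So $U^+$ and $U^-$ each, after projecting to the first $n-1$ coordinates, satisfy the hypothesis of the lemma in dimension $n-1$.

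I would then apply the induction hypothesis to conclude that $0\in\conv_\setQ(\pi(U^+))$ and $0\in\conv_\setQ(\pi(U^-))$, where $\pi\colon\setQ^n\to\setQ^{n-1}$ is the projection onto the first $n-1$ coordinates. Lifting these back, there is a rational convex combination $p=\sum_i q_i u_i$ with $u_i\in U^+$ whose first $n-1$ coordinates vanish, so $p=(0,\ldots,0,p_n)$ with $p_n>0$ (a nonnegative combination of strictly positive numbers, with positive total weight); similarly a rational convex combination $p'=\sum_j q'_j u'_j$ with $u'_j\in U^-$ gives $p'=(0,\ldots,0,p'_n)$ with $p'_n<0$. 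Now $p$ and $p'$ both lie in $\conv_\setQ(U)$, which is convex, and the segment between them in the last coordinate runs from $p'_n<0$ to $p_n>0$, so the rational point $\frac{p_n}{p_n-p'_n}\,p' + \frac{-p'_n}{p_n-p'_n}\,p = 0$ lies in $\conv_\setQ(U)$. This is the desired conclusion.

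The only point that requires a little care — and the main "obstacle", though it is mild — is bookkeeping the weights: one must check that the convex combination of $U^+$ produced by the induction hypothesis genuinely has $p_n>0$ rather than $p_n=0$, which holds because all the $u_i$ have strictly positive last coordinate and the convex weights $q_i$ are nonnegative and sum to $1$, hence are not all zero; and similarly $p'_n<0$. One also needs $U^+$ and $U^-$ to be nonempty, which follows from the hypothesis (take any single sign pattern for the first $n-1$ coordinates). Everything else is routine: projections of convex combinations are convex combinations, so membership in the rational convex hull is preserved under $\pi$ and under the final lifting. No Hahn–Banach or separation input is needed here — this lemma is precisely the combinatorial/constructive substitute that lets the Separation Lemma be proved over $\setQ$.
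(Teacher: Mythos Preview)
Your proof is correct and uses essentially the same induction as the paper, with the steps reordered: the paper first pairs an element of sign pattern $(\sigma_1,\ldots,\sigma_{n-1},+1)$ with one of pattern $(\sigma_1,\ldots,\sigma_{n-1},-1)$ to zero the last coordinate (the first $n-1$ signs survive the convex combination) and then applies the induction hypothesis once to the resulting set, whereas you apply the induction hypothesis twice, to $\pi(U^+)$ and $\pi(U^-)$, and zero the last coordinate afterwards. One small slip: the hypothesis does \emph{not} force $u_n\neq 0$ for every $u\in U$ (it is existential, not universal), but this parenthetical remark is never actually used, and your argument goes through verbatim with $U^+$ and $U^-$ as you defined them.
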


\begin{proof}
By induction on $n$. The case $n=1$ is trivial. Given $U\subseteq\setQ^n$ and
signs $\sigma_1,\ldots,\sigma_{n-1}$, we find $u\in U$ with signs
$\sigma_1,\ldots,\sigma_{n-1},1$ and $v\in U$ with signs
$\sigma_1,\ldots,\sigma_{n-1},-1$. By the case $n=1$, there is
$w=(w_1,\ldots,w_n)\in\conv_\setQ{\{u,v\}}$ such that $w_n=0$. Thus
$w=(w_1,\ldots,w_{n-1},0)$, where $w_1,\ldots,w_{n-1}$ have signs
$\sigma_1,\ldots,\sigma_{n-1}$. Consider the first $n-1$ coordinates. Then
the induction hypothesis yields $0\in\conv_\setQ(U)$.
\end{proof}


\section{Extensions and applications} \label{S8}


\noindent
The Lévy type extension of Theorem \ref{T1} (cf. Lévy \cite{Le1934} for
Fourier series) is obtained by applying the Composition Lemma.

\begin{theorem} \label{T3}
Let $\Lambda\subseteq[0,\infty)$ be an additive semigroup with
$0\in\Lambda$, let $f$ be a holomorphic function defined on a region
$G\subseteq\setC$, and let $w\in\scrw$. Suppose that $a\in\scra_w$ satisfies
$\ol{\wt{a}(\setH)}\subseteq G$. Then there exists a function $c\in\scra_w$
such that $f\circ\wt{a}=\wt{c}$.
\end{theorem}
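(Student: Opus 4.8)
The plan is to deduce this from the Composition Lemma together with the Density Lemma. Recall from Section~\ref{S2} that for a commutative unital Banach algebra $A$ and $a\in A$ one has $\sigma(a)=\{h(a):h\in\Delta(A)\}$, and the Composition Lemma states that whenever $f$ is holomorphic on a region $G\supseteq\sigma(a)$, there is an element $c\in A$ with $h(c)=f(h(a))$ for every $h\in\Delta(A)$. So the first step is to identify $\sigma(a)$ for $a\in\scra_w$. By the Density Lemma (Lemma~\ref{LII.3.2}), $\wt{a}(\setH)$ is dense in $\sigma(a)$; since $\sigma(a)$ is compact (being the spectrum of an element of a Banach algebra), it is in particular closed, so $\sigma(a)=\ol{\wt{a}(\setH)}$. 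Hence the hypothesis $\ol{\wt{a}(\setH)}\subseteq G$ is exactly the statement that $\sigma(a)\subseteq G$, which is what the Composition Lemma requires.

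The second step is to apply the Composition Lemma in $A=\scra_w$ to obtain $c\in\scra_w$ with $h(c)=f(h(a))$ for all $h\in\Delta(\scra_w)$. It remains to check that $\wt{c}=f\circ\wt{a}$ as functions on $\ol{\setH}$. For $s\in\ol{\setH}$ the map $\lambda\mapsto\psi_s(\lambda):=e^{-\lambda s}$ is a bounded character of $\Lambda$ (it takes values in $\ol{\setU}$ since $\re s\geq 0$ and $\lambda\geq 0$), so by the Representation Lemma it induces a homomorphism $h_s\in\Delta(\scra_w)$ with $h_s(b)=\sum_{\lambda\in\Lambda}b(\lambda)e^{-\lambda s}=\wt{b}(s)$ for every $b\in\scra_w$. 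Applying the identity $h_s(c)=f(h_s(a))$ then gives $\wt{c}(s)=f(\wt{a}(s))$ for every $s\in\ol{\setH}$, i.e. $\wt{c}=f\circ\wt{a}$, as desired. (Note that $\wt{a}(s)=h_s(a)\in\sigma(a)\subseteq G$, so $f$ is indeed defined at each $\wt{a}(s)$, confirming the right-hand side makes sense.)

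The only genuine content beyond bookkeeping is the identification $\sigma(a)=\ol{\wt{a}(\setH)}$, and this rests entirely on the Density Lemma, which has already been proved in Sections~\ref{S5}--\ref{S6}; so there is no real obstacle here, the argument being a clean application of the two lemmas. One small point worth spelling out is that $\wt{a}$ is defined on the closed half-plane $\ol{\setH}$, so one should remark that $\wt{a}(\ol{\setH})$ and $\wt{a}(\setH)$ have the same closure (by continuity of $\wt{a}$ on $\ol{\setH}$, which holds because the series converges absolutely and uniformly there for $a\in\scra_w\subseteq\scra_1$), so that evaluating at boundary points $s$ with $\re s=0$ is consistent with the description $\sigma(a)=\ol{\wt{a}(\setH)}$.
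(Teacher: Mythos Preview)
Your proposal is correct and follows essentially the same route as the paper: use the Density Lemma to obtain $\sigma(a)\subseteq\ol{\wt{a}(\setH)}\subseteq G$, then invoke the Composition Lemma. The paper's proof is a one-line version of yours; you have additionally spelled out the verification that the element $c$ produced by the Composition Lemma really satisfies $\wt{c}=f\circ\wt{a}$ (via the evaluation homomorphisms $h_s$), a step the paper leaves implicit.
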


\begin{proof}
For $a\in\scra_w$ we have
\begin{equation*}
 \{h(a):h\in\Delta(\scra_w)\} \subseteq \ol{\{h_s(a):s\in\ol{\setH}\,\}} =
 \ol{\wt{a}(\setH)}\subseteq G,
\end{equation*}
and the assertion follows from Theorem \ref{T1} and the Composition Lemma.
\end{proof}

In particular, Theorems \ref{T1} and \ref{T3} apply to $\Lambda=\setN_0$. We
write $z=e^{-s}$ (a transformation which maps $\setH$ onto
$\setU\setminus\{0\}$) and associate with $a\in\scra_w(\setN_0)$ the power
series
\begin{equation*}
\wt{a}(z)=\wt{a}(e^{-s})=\sum_{n=0}^\infty\,a(n)\,z^n.
\end{equation*}
Since $\wt{a}$ is continuous on the compact set $\ol{\setU}$, we have
$\ol{\wt{a}(\setU)}=\wt{a}(\ol{\setU})$.

Then the weighted version of Wiener's inversion theorem for power series
reads as follows.

\begin{corollary} \label{C1}
For $w\in\scrw(\setN_0)$ the multiplicative group of the Banach algebra
$\scra_w(\setN_0)$ is
\begin{equation*}
 \scra^*_w(\setN_0) =
 \big\{a\in\scra_w(\setN_0):0\notin\wt{a}(\ol{\setU})\big\}.
\end{equation*}
\end{corollary}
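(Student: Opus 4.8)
The plan is to obtain Corollary~\ref{C1} as a direct specialization of Theorem~\ref{T1} to the semigroup $\Lambda=\setN_0$, using the change of variables $z=e^{-s}$ together with the compactness of $\ol{\setU}$ to replace the closure of an image by the image of a closure. First I would record the elementary fact that the map $s\mapsto e^{-s}$ carries the open right half plane $\setH$ onto the punctured open disk $\setU\setminus\{0\}$, so that for $a\in\scra_w(\setN_0)$ the Dirichlet series $\wt{a}(s)=\sum_{n\geq 0}a(n)e^{-ns}$ becomes the power series $\wt{a}(z)=\sum_{n\geq 0}a(n)z^n$, and $\wt{a}(\setH)=\wt{a}(\setU\setminus\{0\})$ under this identification. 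This power series converges absolutely and uniformly on $\ol{\setU}$ (since $\|a\|_w<\infty$ and $w\geq 1$), hence defines a continuous function on the compact set $\ol{\setU}$ that is holomorphic on $\setU$.

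The key step is the passage from $\ol{\wt{a}(\setH)}$ to $\wt{a}(\ol{\setU})$. Since $\wt{a}\colon\ol{\setU}\to\setC$ is continuous and $\ol{\setU}$ is compact, the image $\wt{a}(\ol{\setU})$ is compact, hence closed, and contains $\wt{a}(\setU\setminus\{0\})=\wt{a}(\setH)$; therefore $\ol{\wt{a}(\setH)}\subseteq\wt{a}(\ol{\setU})$. For the reverse inclusion, any point of $\wt{a}(\ol{\setU})$ is $\wt{a}(z_0)$ for some $z_0\in\ol{\setU}$, and by continuity of $\wt{a}$ together with the density of $\setU\setminus\{0\}$ in $\ol{\setU}$ we can approximate $z_0$ by points $z\in\setU\setminus\{0\}$, so $\wt{a}(z_0)\in\ol{\wt{a}(\setU\setminus\{0\})}=\ol{\wt{a}(\setH)}$. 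Thus $\ol{\wt{a}(\setH)}=\wt{a}(\ol{\setU})$, which is exactly the identity $\ol{\wt{a}(\setU)}=\wt{a}(\ol{\setU})$ already noted in the excerpt just before the statement.

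With this identity in hand, Theorem~\ref{T1} applied to $\Lambda=\setN_0$ and $w\in\scrw(\setN_0)$ gives
\begin{equation*}
\scra_w^*(\setN_0)=\{a\in\scra_w(\setN_0): 0\notin\ol{\wt{a}(\setH)}\}
=\{a\in\scra_w(\setN_0): 0\notin\wt{a}(\ol{\setU})\},
\end{equation*}
which is the assertion of Corollary~\ref{C1}. I do not expect any serious obstacle here: the only mild point of care is ensuring that the absolutely convergent power series genuinely extends continuously to the closed disk (guaranteed by $\sum_n|a(n)|\leq\sum_n|a(n)|w(n)=\|a\|_w<\infty$, using admissibility condition~a)), and that the boundary values $\wt{a}(z)$ for $|z|=1$ are correctly incorporated via the compactness argument rather than being lost when one only looks at $\setH$. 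Everything else is a transcription of Theorem~\ref{T1} through the biholomorphism $s\mapsto e^{-s}$.
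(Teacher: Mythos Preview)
Your proposal is correct and follows exactly the paper's approach: the paper does not give a separate proof of Corollary~\ref{C1}, but derives it from Theorem~\ref{T1} via the substitution $z=e^{-s}$ (mapping $\setH$ onto $\setU\setminus\{0\}$) together with the observation, stated just before the corollary, that continuity of $\wt{a}$ on the compact set $\ol{\setU}$ yields $\ol{\wt{a}(\setU)}=\wt{a}(\ol{\setU})$. Your write-up is slightly more careful in distinguishing $\setU\setminus\{0\}$ from $\setU$, but this is a cosmetic refinement of the same argument.
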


The lemmas needed for the proof of Theorems \ref{T1} and \ref{T3} easily
extend to additive semigroups $\Lambda$ of \emph{product type},%
\footnote{In fact, every bounded character~$\psi$ of~$\Lambda$ is of the form $\psi(\lambda_1,\ldots,\lambda_r)=\prod_{\vr=1}^r\psi_\vr(\lambda_\vr)$
with bounded characters $\psi_\vr$ of $\Lambda_\vr$, which can be
approximated by $e^{-\lambda_\vr s_\vr}$ on a finite set
$\Gamma_\vr\subseteq\Lambda_\vr$. Thus $\psi(\lambda)$ can be approximated by
$e^{-\lambda\cdot s}$ on $\Gamma_1\times\cdots\times\Gamma_r$.}
by which we understand semigroups
$\Lambda=\Lambda_1\times\cdots\times\Lambda_r$ with additive semigroups
$\Lambda_\vr\subseteq[0,\infty)$ and $0\in\Lambda_\vr$ for $\vr=1,\ldots,r$.
Then the multidimensional versions of Theorems \ref{T1} and \ref{T3} cover
arithmetic functions of $r$ variables. We put
$\lambda\cdot s=\lambda_1s_1+\cdots+\lambda_rs_r$ for
$\lambda=(\lambda_1,\dots,\lambda_r)\in\Lambda$ and
$s=(s_1,\dots,s_r)\in\setC^{\,r}$. The $r$-dimensional Dirichlet series
associated with $a\colon\Lambda\to\setC$ is
\begin{equation} \label{eq15}
 \wt{a}(s) = \sum_{\lambda\in\Lambda}\,a(\lambda)\,e^{-\lambda\cdot s}
 \qquad(s\in\setC^r).
\end{equation}

Repeating the reasoning leading to Theorem \ref{T1}, we see that the
assertion of Theorem \ref{T1} remains true for additive semigroups
$\Lambda\subseteq[0,\infty)^r$ of product type. Indeed, we can dispense with
$\Lambda$ to be of product type.

\begin{theorem} \label{T4}
Let $L\subseteq[0,\infty)^r$ be an additive semigroup with $0\in L$ and
$w\in\scrw(L)$. If $a\in\scra_w(L)$ satisfies $0\notin\ol{\wt{a}(\setH^r)}$,
then $a$ is invertible in $\scra_w(L)$.
\end{theorem}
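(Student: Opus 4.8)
The plan is to repeat, in the $r$-dimensional setting, the second proof of Theorem~\ref{T1}. By Proposition~\ref{P1}, $\scra_w(L)$ is a commutative unital Banach algebra, and the Representation Lemma --- which, like Propositions~\ref{P1} and~\ref{spectrum} and Lemma~\ref{admissible} on which it rests, is formulated for arbitrary commutative semigroups --- applies to $L\subseteq[0,\infty)^r$ verbatim: every $h\in\Delta(\scra_w(L))$ has the form $h(a)=h_\psi(a)=\sum_{\lambda\in L}a(\lambda)\psi(\lambda)$ for a bounded character $\psi$ of $L$, and conversely. By Theorem~\ref{T2}\,\ref{2itc}) it therefore suffices to show that $h_\psi(a)\neq 0$ for every bounded character $\psi$ of $L$. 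Since $\psi_s(\lambda):=e^{-\lambda\cdot s}$ defines a bounded character of $L$ for each $s\in\ol{\setH}^r$ (as $\re(\lambda\cdot s)\geq 0$ there), we have $\wt{a}(\setH^r)\subseteq\{h(a):h\in\Delta(\scra_w(L))\}=\sigma(a)$, so the statement will follow from the \emph{$r$-dimensional Density Lemma}: for every $a\in\scra_w(L)$ the set $\wt{a}(\setH^r)$ is dense in $\sigma(a)$. Indeed, then $0\notin\ol{\wt{a}(\setH^r)}\supseteq\sigma(a)$, whence $0\notin\sigma(a)$ and $a$ is invertible.

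To establish the $r$-dimensional Density Lemma I would argue as in Sections~\ref{S5}--\ref{S6}. Fix $a\in\scra_w(L)$, $\vt>0$ and a bounded character $\psi$ (so $\psi(L)\subseteq\ol{\setU}$ by Lemma~\ref{admissible}); by absolute summability it is enough, given a finite $\Gamma\subseteq L$, to produce $s\in\setH^r$ with $\bigl|\sum_{\lambda\in\Gamma}a(\lambda)e^{-\lambda\cdot s}-\sum_{\lambda\in\Gamma}a(\lambda)\psi(\lambda)\bigr|<\vt$. Here the Extension Lemma applies with its proof essentially unchanged: Steps~1--9 of Section~\ref{S6} are carried out entirely inside the finite-dimensional $\setQ$-vector space $V=\Span_\setQ(\Gamma)$, its realification, and the attached rational and real dual cones, and the only place where the one-dimensional order structure entered --- the facts that $0\notin\conv_\setR(\Gamma)$ and that $\cone_\setR(\Gamma)$ is a pointed polyhedral cone --- remains valid in $\setR^r$, since a strictly positive convex combination of nonzero vectors of $[0,\infty)^r$ is again a nonzero vector of $[0,\infty)^r$, and $[0,\infty)^r$ contains no line. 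This provides a $\setQ$-linearly independent set $\Beta=\{\beta_1,\dots,\beta_k\}\subseteq[0,\infty)^r\setminus\{0\}$ with $\Gamma\subseteq[\Beta]=\setN_0\beta_1+\dots+\setN_0\beta_k$, together with a bounded character $\phi$ on $[\Beta]$ agreeing with $\psi$ on $\Gamma$ (so $\phi(\beta_\kappa)\in\ol{\setU}$, as every bounded character does). Writing $\lambda=\sum_\kappa\nu_\kappa(\lambda)\beta_\kappa$ with $\nu_\kappa(\lambda)\in\setN_0$ for $\lambda\in\Gamma$ and setting $P(z_1,\dots,z_k)=\sum_{\lambda\in\Gamma}a(\lambda)\prod_\kappa z_\kappa^{\nu_\kappa(\lambda)}$, the two sums above become $P(e^{-\beta_1\cdot s},\dots,e^{-\beta_k\cdot s})$ and $P(\phi(\beta_1),\dots,\phi(\beta_k))$, and we are reduced to the $r$-dimensional analogue of Lemma~\ref{L1} (perturbing $s$ from $\ol{\setH}^r$ into the open $\setH^r$ being harmless, since the estimate is strict).

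That analogue asserts: for $\setQ$-linearly independent $\beta_1,\dots,\beta_k\in[0,\infty)^r\setminus\{0\}$ and $\mathfrak v(s):=(e^{-\beta_1\cdot s},\dots,e^{-\beta_k\cdot s})$, $s\in\ol{\setH}^r$, the set $f(\mathfrak v(\ol{\setH}^r))$ is dense in $f(\ol{\setU}^k)$ whenever $f$ is continuous on $\ol{\setU}^k$ and holomorphic on $\setU^k$. Its proof copies the original, with the one-dimensional Kronecker theorem replaced by its multidimensional form: for $\setQ$-linearly independent $\beta_1,\dots,\beta_k\in\setR^r$ the map $t\mapsto(e^{i\beta_1\cdot t},\dots,e^{i\beta_k\cdot t})$, $t\in\setR^r$, has dense image in $(\pa\setU)^k$ --- a standard fact, since by Pontryagin duality a character of $(\pa\setU)^k$ that annihilates the image amounts to an integer relation $\sum_\kappa m_\kappa\beta_\kappa=0$. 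Hence, for every $\sigma\in[0,\infty)^r$, $\mathfrak v(\sigma+i\setR^r)$ is dense in the poly-circle $\{\mathfrak z\in\setC^k:|z_\kappa|=e^{-\beta_\kappa\cdot\sigma}\ \text{for}\ 1\leq\kappa\leq k\}$. Supposing $|g|\geq\vt$ on $\mathfrak v(\ol{\setH}^r)$ with $g:=f-c$ for some $c\in f(\ol{\setU}^k)$, one gets $|g|\geq\vt$ on every such poly-circle; fixing $\mathbf{1}=(1,\dots,1)$ (so $\beta_\kappa\cdot\mathbf{1}=\sum_{\rho=1}^r\beta_{\kappa,\rho}>0$) and putting $\tau_0:=\inf\{\tau\geq 0:|g|\geq\tfrac12\vt\ \text{on}\ \prod_\kappa e^{-\tau(\beta_\kappa\cdot\mathbf{1})}\ol{\setU}\}$, the maximum principle and the Weierstrass convergence theorem applied to each coordinate of the polydisc at $\tau_0$ force $|1/g|\leq 1/\vt$ there, hence $|g|\geq\vt$ on that polydisc --- contradicting the definition of $\tau_0$ if $\tau_0>0$ and $c\in f(\ol{\setU}^k)$ if $\tau_0=0$ (note that $\mathfrak o=(0,\dots,0)$ is a limit point of $\mathfrak v(\ol{\setH}^r)$ because each $\beta_\kappa\neq 0$ lies in $[0,\infty)^r$).

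The one genuinely new ingredient is the multidimensional Kronecker theorem; the rest is a line-by-line transcription of Sections~\ref{S5}--\ref{S6} and of the second proof of Theorem~\ref{T1}. The point I expect to require the most care is the convex-geometric bookkeeping in the Extension Lemma --- verifying that it still delivers a $\setQ$-linearly independent generating family with non-negative coordinates, which is precisely what makes $e^{-\beta_\kappa\cdot s}\in\ol{\setU}$ for all $s\in\ol{\setH}^r$ and so allows the $r$-dimensional Lemma~\ref{L1} to be applied. Finally, the L\'evy version over $L$ --- the analogue of Theorem~\ref{T3} --- follows immediately by feeding the resulting inclusion $\sigma(a)\subseteq\ol{\wt{a}(\setH^r)}$ into the Composition Lemma.
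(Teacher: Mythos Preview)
Your approach is genuinely different from the paper's. The paper does \emph{not} redo the Density Lemma directly for an arbitrary $L\subseteq[0,\infty)^r$. Instead it embeds $L$ into the product-type semigroup $\Lambda=\Lambda_1\times\cdots\times\Lambda_r$ (where $\Lambda_\vr$ is the set of $\vr$th coordinates of~$L$); for such $\Lambda$ every bounded character factors as $\psi(\lambda)=\prod_\vr\psi_\vr(\lambda_\vr)$, so the one-dimensional Density Lemma applied coordinatewise already gives the result for $\scra_1(\Lambda)$. The genuinely new step is then to show that the inverse $a^{-1}\in\scra_1(\Lambda)$ is supported on~$L$: one picks $\rho>0$ so large that $\|a-a(0)\ve\|_{w_\rho}<|a(0)|$ for the (non-admissible) weight $w_\rho(\lambda)=e^{-\rho(\lambda_1+\cdots+\lambda_r)}$, whence $a$ is invertible in $\scra_{w_\rho}(L)$ by the Neumann series; the two inverses then agree in the ambient algebra $\scra_{w_\rho}(\Lambda)$. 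Your direct route trades this short algebraic trick for a full $r$-dimensional Extension Lemma and an $r$-dimensional analogue of Lemma~\ref{L1}.

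Your flagged concern is exactly where the difficulty lies, and it is more serious than ``bookkeeping''. The proof of the Extension Lemma constructs the $\beta_\kappa$ as the basis of $V=\Span_\setQ(\Gamma)$ dual to a $\setQ$-basis $\beta^*_1,\ldots,\beta^*_k$ of~$V^*$ chosen inside $\Gamma^\star$ so that the simplicial cone $\cone_\setR(\beta^*_1,\ldots,\beta^*_k)$ contains $\zeta$ and~$\theta$. For $\Beta\subseteq[0,\infty)^r$ one needs, in addition, that this same simplicial cone contains the $r$ coordinate functionals $\iota_1,\ldots,\iota_r\in\Gamma^\star_\setR$ (since $\iota_j(\beta_\kappa)\geq 0$ for all $j,\kappa$ is equivalent to $\iota_j\in\{\beta_1,\ldots,\beta_k\}^\star_\setR=\cone_\setR(\beta^*_1,\ldots,\beta^*_k)$). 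In one dimension there is a single such functional, and it lies in the interior of $\Gamma^\star_\setR$; in $r$ dimensions there are $r$ of them, some possibly on the boundary (whenever an element of $\Gamma$ has a vanishing coordinate), and it is not evident that the face-by-face construction of Steps~\ref{step7}--\ref{step8} can accommodate all of them together with the constraints on $\zeta$ and~$\theta$. Without $\Beta\subseteq[0,\infty)^r$ your $r$-dimensional Lemma~\ref{L1} fails at the outset, since $\mathfrak v$ no longer maps $\ol{\setH}^r$ into $\ol{\setU}^k$. The paper's embedding-plus-Neumann argument sidesteps this issue entirely; your approach would buy a self-contained Density Lemma for general~$L$, but at the cost of a substantial strengthening of the Extension Lemma that is not supplied.
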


\begin{proof}
According to the Representation Lemma
$\scra^*_w(L)=\scra^*_1(L)\cap\scra_w(L)$ holds for any additive semigroup
$L$ with $0\in L$ and any admissible weight function $w$. Therefore it
suffices to show that $a\in\scra^*_1(L)$.

Denote by $\Lambda_\vr$ the set of $\vr$th components of $L$. Then
$\Lambda_\vr\subseteq[0,\infty)$ is an additive semigroup with
$0\in\Lambda_\vr$, $L$ is a subsemigroup of
$\Lambda=\Lambda_1\times\cdots\times\Lambda_r$, and
$\scra_1(L)\subseteq\scra_1(\Lambda)$. By the previous remark we have
$a\in\scra^*_1(\Lambda)$ and, in particular, $a(0)\neq 0$.

To obtain $a\in\scra^*_1(L)$, we have to show that $a^{-1}(\lambda)=0$ for
all $\lambda\in\Lambda\setminus L$. As a tool, let us consider the weight
functions $w_\rho$ on $L$ (and $\Lambda$) for $\rho\geq 0$ defined via
$w_\rho(\lambda):=e^{-\rho(\lambda_1+\cdots +\lambda_r)}$ (which are not
admissible if $\rho>0$, but define the weighted algebra
$\scra_{w_\rho}$). Choose $\rho>0$ so large that
\begin{equation}\label{eq20}
 \frac{1}{|a(0)|}\,\sum_{\lambda\in L\setminus\{0\}}|
 a(\lambda)|e^{-\rho(\lambda_1+\cdots+\lambda_r)} < 1.
\end{equation}
Then $a\in \scra_{w_\rho}^*(L)$, since
$a=a(0)\big(\ve+\frac{1}{a(0)}(a-a(0)\ve)\big)$, where
$\big\|\frac{1}{a(0)}(a-a(0)\ve)\big\|_{w_\rho}<1$ by~\eqref{eq20}. 
By Theorem \ref{T2}\,a) the inverse $b$ of $a$ in $\scra_{w_\rho}^*(L)$
exists. Then both $a^{-1}\in \scra_1(\Lambda)$ and $b$ are inverses of $a$
in the algebra $\scra_{w_\rho}(\Lambda)$ (which contains both
$\scra_1(\Lambda)$ and $\scra_{w_\rho}(L)$ as unital subalgebras), and hence
coincide. Thus $a^{-1}=b\in\scra_1(\Lambda)\cap\scra_{w_\rho}(L)=\scra_1(L)$.
\end{proof}

Similarly we obtain a multi-dimensional weighted Wiener-Lévy type theorem
by using the Composition Lemma and the Density Lemma in the version
$\sigma(a)\subseteq\ol{\wt{a}(\setH^r)}$ for $a\in\scra_w(L)$ for additive
semigroups $L\subseteq[0,\infty)^r$ with $0\in L$.

\begin{theorem} \label{T5}
Let $L\subseteq[0,\infty)^r$ be an additive semigroup with $0\in L$, $f$ a
holomorphic function defined on a region $G\subseteq\setC$, and
$w\in\scrw(L)$. If $a\in\scra_w(L)$ satisfies
$\ol{\wt{a}(\setH^r)}\subseteq G$, then there exists a function
$c\in\scra_w(L)$ such that $f\circ\wt{a}=\wt{c}$.
\end{theorem}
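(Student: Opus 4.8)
The plan is to imitate the proof of Theorem \ref{T3}, replacing the one-dimensional half-plane $\setH$ by the polydisc domain $\setH^r$ and invoking the multidimensional inversion result Theorem \ref{T4} in place of Theorem \ref{T1}. The key input is that for $a\in\scra_w(L)$ the spectrum $\sigma(a)$ is contained in $\ol{\wt{a}(\setH^r)}$; granting this, the Composition Lemma finishes the argument immediately.

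First I would record the spectral inclusion. By the Representation Lemma every $h\in\Delta(\scra_w(L))$ is of the form $h(a)=\sum_{\lambda\in L}a(\lambda)\psi(\lambda)$ for a bounded character $\psi$ of $L$, so $\sigma(a)=\{h(a):h\in\Delta(\scra_w(L))\}=\{\sum_\lambda a(\lambda)\psi(\lambda):\psi\text{ a bounded character of }L\}$. Fix such a $\psi$ and $\vt>0$. Choosing a finite $\Gamma\subseteq L$ with $\sum_{\lambda\in L\setminus\Gamma}|a(\lambda)|w(\lambda)<\vt$ reduces, exactly as in the proof of the Density Lemma, to approximating $\psi$ by a character of the form $\lambda\mapsto e^{-\lambda\cdot s}$, $s\in\setH^r$, on the finite set $\Gamma$. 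Writing $\Lambda_\vr$ for the $\vr$th coordinate semigroup of $L$ and $\Lambda=\Lambda_1\times\cdots\times\Lambda_r$, one embeds $L\hookrightarrow\Lambda$; the footnote on product-type semigroups shows that any bounded character of $\Lambda$ restricts to one of $L$, and (what is actually needed here) that the one-dimensional Density/Extension machinery of Sections \ref{S5}--\ref{S7}, applied coordinate-wise, produces $s=(s_1,\ldots,s_r)\in\setH^r$ with $|\sum_{\lambda\in\Gamma}a(\lambda)(e^{-\lambda\cdot s}-\psi(\lambda))|<\vt$. Hence $h(a)\in\ol{\wt{a}(\setH^r)}$, giving $\sigma(a)\subseteq\ol{\wt{a}(\setH^r)}$.

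With the inclusion in hand the proof concludes in one line: the hypothesis $\ol{\wt{a}(\setH^r)}\subseteq G$ yields $\sigma(a)\subseteq G$, so the Composition Lemma provides $c\in\scra_w(L)$ with $h(c)=f(h(a))$ for every $h\in\Delta(\scra_w(L))$. Applying this to the functionals $h_s$ associated with the characters $\lambda\mapsto e^{-\lambda\cdot s}$, $s\in\setH^r$, gives $\wt{c}(s)=h_s(c)=f(h_s(a))=f(\wt{a}(s))$, i.e.\ $f\circ\wt{a}=\wt{c}$, as required. (Theorem \ref{T4} is what guarantees, implicitly through the Composition Lemma's reliance on invertibility of $c-\mu u$ whenever $\mu\notin\sigma(c)$, that the whole Gelfand-theoretic picture for $\scra_w(L)$ is consistent; but formally only the Composition Lemma and the spectral inclusion are quoted.)

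The main obstacle is the multidimensional Density step, i.e.\ establishing $\sigma(a)\subseteq\ol{\wt{a}(\setH^r)}$ for a general — not necessarily product-type — semigroup $L\subseteq[0,\infty)^r$. For product type it is the routine coordinate-wise argument sketched in the footnote. For general $L$ one must either run the Extension Lemma argument of Section \ref{S6} with $[0,\infty)$ replaced by $[0,\infty)^r$ (checking that the convex-cone and rational-polytope separation arguments of Sections \ref{S6}--\ref{S7} go through verbatim in $\setR^r$, which they do, since nothing there used $r=1$ beyond $\Gamma\subseteq(0,\infty)$ — here one uses instead that $\Gamma$ lies in an open halfspace, or splits $L$ along coordinate hyperplanes as in the proof of Theorem \ref{T4}), or deduce the general case from the product-type case by the descent argument of Theorem \ref{T4}, passing through the auxiliary weights $w_\rho$ to control the components of $c$ supported on $\Lambda\setminus L$. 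I would present the latter route, since it reuses Theorem \ref{T4} directly and keeps the exposition short.
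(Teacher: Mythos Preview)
Your overall strategy---establish $\sigma(a)\subseteq\ol{\wt{a}(\setH^r)}$ and then invoke the Composition Lemma---is exactly the paper's one-sentence proof. Where you make life harder than necessary is in justifying the spectral inclusion for a general (non-product-type) $L$. Your middle paragraph has a gap as written: a bounded character $\psi$ of $L$ need not factor through the coordinate semigroups $\Lambda_\vr$, so the ``coordinate-wise'' density machinery from the footnote does not apply to it directly. You correctly flag this as the main obstacle and propose either re-running the Extension Lemma in $[0,\infty)^r$ or a descent argument on~$c$ via the auxiliary weights $w_\rho$.

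Both detours are avoidable. The spectral inclusion for general $L$ is an immediate \emph{corollary of Theorem~\ref{T4}} (which is why the paper places Theorem~\ref{T5} right after it): if $\mu\notin\ol{\wt{a}(\setH^r)}$, then $0\notin\ol{\{\wt{a}(s)-\mu:s\in\setH^r\}}$, so Theorem~\ref{T4} applied to $a-\mu\ve\in\scra_w(L)$ gives invertibility in $\scra_w(L)$, i.e.\ $\mu\notin\sigma(a)$. With this in hand the Composition Lemma, applied \emph{in $\scra_w(L)$ itself}, already produces $c\in\scra_w(L)$; no further descent from $\scra_w(\Lambda)$ to $\scra_w(L)$ is needed. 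Your parenthetical remark about Theorem~\ref{T4} hints at this, but you should make it the actual argument rather than an aside.
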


Let $\Lambda\subset[0,\infty)$ be a free additive semigroup with
$0\in\Lambda$ and finite or countable generating set~$\Beta$. Arithmetical
applications are usually based on the associated free \emph{multiplicative}
semigroup $\caln:=e^\Lambda\in[1,\infty)$ with $1\in\caln$ and the finite
or countable generating set $\calp:=e^\Beta$ of \emph{prime elements}. By
definition each $n\in\caln$ has a unique factorization of the form
$n=p_1^{\nu_1}\cdots p_r^{\nu_r}$ with distinct $p_1,\ldots,p_r\in\calp$
and positive integer exponents $\nu_1,\ldots,\nu_r$, apart of the order of
prime element powers. As usual, the empty product has the value $1$, and
elements $m,n\in\caln$ not having any common prime divisors are called
\emph{coprime}. The prototype of such multiplicative semigroups is $\setN$
generated by the set $\setP$ of primes.

The functions $a\colon\caln\to\setC$ now correspond to the Dirichlet series
\begin{equation*}
\wt{a}(s):=\sum_{n\in\caln}\,\frac{a(n)}{n^s} \qquad (s\in\setC),
\end{equation*}
and the functions $\om\colon\caln\to[1,\infty)$ with $\om(n)\geq\om(1)=1$
that are submultiplicative, i.e. $\om(mn)\leq \om(n)\,\om(m)$ for all
$m,n\in\caln$, and satisfy
$\lim_{k\to\infty}{\!\textstyle\sqrt[k]{\om(n^k)}}=1~$ for all $n\in\caln$
form the class $\scrw_1:=\scrw_1(\caln)$ of admissible weight functions.
By Theorem \ref{T1} each $\om\in\scrw_1$ yields a Banach algebra
$\scra_\om:=\scra_\om(\caln)$ of functions $a\colon\caln\to\setC$, endowed
with the linear operations and convolution
\begin{equation} \label{eq21}
 (a*b)(n):=\sum_{\substack{\ell,m\in\caln \\ \ell\,m=n}} a(\ell)\,b(m)
 \qquad(n\in\caln)
\end{equation}
and with bounded $\om$-norm $\|a\|_\om:=\sum_{n\in\caln} |a(n)|\,\om(n)$.
The unity in $\scra_\om$ is $\ve:=\delta_1$, and $\scra_\om$ has the
multiplicative group
\begin{equation*}
\scra^*_\om(\caln)=\{a\in\scra_\om:0\notin\ol{\wt{a}(\setH)}\}.
\end{equation*}

We consider the class $\scrm$ of \emph{multiplicative functions}
$a\colon\caln\to\setC$, i.e. $a(mn)=a(m)\,a(n)$ for all coprime $m,n\in\caln$
and $a(1)=1$. Since $\scrm$ is closed under convolution and inversion,
$\scrm$ is a group. For each $\om\in\scrw_1$, the Dirichlet series
$\wt{a}(s)$ of $a\in\scrm_\om:=\scrm\cap\scra_\om(\caln)$ converges
absolutely for $s\in\ol{\setH}$ and has the Euler product representation
\begin{equation*}
\wt{a}(s) = \prod_{p\in\calp}\,
 \Big(1+\frac{a(p)}{p^s}+\frac{a(p^2)}{p^{2s}}+\cdots\Big) =:
 \prod_{p\in\calp}\,\wt{a}_p(s).
\end{equation*}
Since each factor $\wt{a}_p(s)$ represents an absolutely convergent power
series in $z=p^{-s}\in\ol{\setU}$ and thus a continuous function on the
compact disc $\ol{\setU}$, its weighted inversion according to Corollary
\ref{C1} with $w(k)=\om(p^k)$ for $k\in\setN_0$ only requires
$0\notin\wt{a}_p(\ol{\setH})$, instead of $0\notin\ol{\wt{a}_p(\setH)}$.

\begin{corollary} \label{C3}
For $\om\in\scrw_1$ the class $\scrm_\om$ is a unital subsemigroup of $\scrm$
under the convolution \eqref{eq21} with the multiplicative group
\begin{equation} \label{eq22}
 \scrm^*_\om=\{a\in\scrm_\om:
 \wt{a}_p(s)\neq 0 \text{ for all }s\in\ol{\setH} \text{ and }p\in\calp\}.
\end{equation}
\end{corollary}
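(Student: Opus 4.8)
The plan is to localise the problem at each prime and feed the one-variable Corollary~\ref{C1} into the Euler product. That $\scrm_\om$ is a unital subsemigroup of $\scrm$ is immediate: $\ve=\delta_1$ is multiplicative with $\|\ve\|_\om=\om(1)=1$, so $\ve\in\scrm_\om$, and $\scrm_\om=\scrm\cap\scra_\om(\caln)$ is closed under $*$ because $\scrm$ is a group under Dirichlet convolution while $\scra_\om(\caln)$ is a Banach algebra under the same operation. Before turning to the unit group I would record a dictionary between a function $a\colon\caln\to\setC$ and its $p$-components. Fix $p\in\calp$ and set $a_p(k):=a(p^k)$ for $k\in\setN_0$. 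Then $w^{(p)}(k):=\om(p^k)$ is an admissible weight on $\setN_0$ (its three defining properties follow from those of $\om$ evaluated along the powers of $p$), for $a\in\scra_\om(\caln)$ one has $\|a_p\|_{w^{(p)}}\le\|a\|_\om$ and hence $a_p\in\scra_{w^{(p)}}(\setN_0)$, and the associated power series $\wt{a_p}(z)=\sum_{k\ge0}a(p^k)z^k$ satisfies $\wt{a_p}(p^{-s})=\wt{a}_p(s)$ and $\wt{a_p}(0)=a(1)=1$. Since $s\mapsto p^{-s}$ maps $\ol{\setH}$ onto $\ol{\setU}\setminus\{0\}$, Corollary~\ref{C1} yields the equivalence
\begin{equation*}
a_p\in\scra^*_{w^{(p)}}(\setN_0)\quad\Longleftrightarrow\quad 0\notin\wt{a_p}(\ol{\setU})\quad\Longleftrightarrow\quad \wt{a}_p(s)\neq0\ \text{ for all }s\in\ol{\setH}.
\end{equation*}

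For the inclusion ``$\subseteq$'' in \eqref{eq22}, take $a\in\scrm^*_\om$ with inverse $b\in\scrm_\om$. Evaluating $a*b=\ve$ on a prime power and using that the divisors of $p^k$ in $\caln$ are exactly $1,p,\dots,p^k$, I obtain $a_p*b_p=\ve$, with $b_p\in\scra_{w^{(p)}}(\setN_0)$; hence $a_p$ is invertible in $\scra_{w^{(p)}}(\setN_0)$, and the dictionary gives $\wt{a}_p(s)\neq0$ on $\ol{\setH}$ for every $p\in\calp$.

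For ``$\supseteq$'', assume $\wt{a}_p(s)\neq0$ for all $s\in\ol{\setH}$ and all $p\in\calp$. By the dictionary each $a_p$ has an inverse $b_p$ in $\scra_{w^{(p)}}(\setN_0)$ with $b_p(0)=1$; let $b\colon\caln\to\setC$ be the unique multiplicative function with $b(p^k)=b_p(k)$. Evaluating on prime powers shows $a*b$ and $\ve$ agree there, and since both are multiplicative they agree on all of $\caln$, so $a*b=\ve$. The substantive point is to check $b\in\scra_\om$. Submultiplicativity of $\om$ gives $|b(n)|\om(n)\le\prod_i|b_{p_i}(\nu_i)|\om(p_i^{\nu_i})$ for $n=\prod_i p_i^{\nu_i}$, and summing first over the $n$ whose prime divisors lie in a fixed finite set and then passing to the supremum yields $\|b\|_\om\le\prod_{p\in\calp}\|b_p\|_{w^{(p)}}$. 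This infinite product converges once $\sum_p(\|b_p\|_{w^{(p)}}-1)<\infty$, which I extract from $\sum_p\sum_{k\ge1}|a(p^k)|\om(p^k)\le\|a\|_\om<\infty$: for all but finitely many $p$ we have $\|a_p-\ve\|_{w^{(p)}}=\sum_{k\ge1}|a(p^k)|\om(p^k)<\frac{1}{2}$, and for such $p$ the Neumann series $b_p=\sum_{m\ge0}(\ve-a_p)^m$ gives $\|b_p\|_{w^{(p)}}-1=\|b_p-\ve\|_{w^{(p)}}\le 2\|a_p-\ve\|_{w^{(p)}}$; summing over all $p$ (the finitely many exceptional ones contributing a finite amount) closes the estimate. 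Hence $b\in\scra_\om(\caln)$, so $b\in\scrm_\om$ and $a\in\scrm^*_\om$.

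I expect this last estimate to be the main obstacle: the hypothesis only constrains the \emph{local} Euler factors, so one must manufacture from the local inverses a genuine element of the \emph{global} weighted algebra $\scra_\om(\caln)$, and it is exactly the submultiplicativity of $\om$ together with the Neumann-series bound that makes the relevant product $\prod_p\|b_p\|_{w^{(p)}}$ converge. One can also bypass the explicit construction of $b$: the same smallness of $\|a_p-\ve\|_{w^{(p)}}$ for $p$ outside a finite set $S$ forces $|\wt{a}(s)|=\prod_p|\wt{a}_p(s)|$ to be bounded below on $\ol{\setH}$ (the finitely many factors with $p\in S$ are continuous and nonvanishing on the compact disc $\ol{\setU}$), whence $0\notin\ol{\wt{a}(\setH)}$, and Theorem~\ref{T1} then gives invertibility of $a$ in $\scra_\om(\caln)$ with an automatically multiplicative inverse.
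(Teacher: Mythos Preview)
Your argument is correct. The paper does not give a separate proof of Corollary~\ref{C3}; it merely records, in the paragraph preceding the statement, that each Euler factor $\wt{a}_p(s)$ is an absolutely convergent power series in $z=p^{-s}$ and that Corollary~\ref{C1} with $w(k)=\om(p^k)$ therefore characterises invertibility of the local factor by the condition $\wt{a}_p(s)\neq 0$ on $\ol{\setH}$. Your proof follows exactly this route---localise at each prime, apply Corollary~\ref{C1}, and reassemble via the Euler product---so it is the natural completion of the paper's sketch.

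What you add, and what the paper suppresses entirely, is the passage from local to global invertibility: the bound $\|b\|_\om\le\prod_{p}\|b_p\|_{w^{(p)}}$ together with the Neumann-series estimate $\|b_p\|_{w^{(p)}}-1\le 2\|a_p-\ve\|_{w^{(p)}}$ for all but finitely many~$p$, which makes the product converge because $\sum_p\|a_p-\ve\|_{w^{(p)}}\le\|a\|_\om<\infty$. This step is genuinely needed and your treatment of it is clean. Your alternative (show directly that $|\wt{a}(s)|$ is bounded below on $\ol{\setH}$ and invoke Theorem~\ref{T1}) is also valid and perhaps closer in spirit to how the paper organises the theory, since it routes the global conclusion through the main inversion theorem rather than through an explicit construction of the inverse; either way the same smallness of the tail factors is doing the work.
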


For infinite generating set $\calp$, there are many arithmetically
interesting multiplicative functions $a$ that do not belong to $\scrm_\om$,
particularly those for which the series $\sum_{p\in\calp} a(p)\,\om(p)$ does
not converge absolutely. Therefore we extend $\scrm_\om$ by partly replacing
the $\om$-norm with the mean square $\om$-norm (for $\caln=\setN$ cf.~Lucht
\cite{Lu1993} and, with $\om=1$, Heppner and Schwarz \cite{HS1983}):

\begin{proposition} \label{P3}
Let $\caln\subset[1,\infty)$ be a free multiplicative semigroup with
$1\in\caln$ and countable generating set $\calp$. Then, for $\om\in\scrw_1$,
the class
\begin{equation*}
 \scrg_\om=\Big\{a\in\scrm:\sum_{p\in\calp}\,|a(p)|^2\,\om^2(p)<\infty
 \text{ and }
 \sum_{\substack{p\in\calp \\ k\geq 2}}\,|a(p^k)|\,\om(p^k)<\infty\Big\}
\end{equation*}
is a unital subsemigroup of $\scrm_\om$ under the convolution \eqref{eq21}
with the multiplicative group
\begin{equation*}
 \scrg^*_\om=\big\{a\in\scrg_\om:\wt{a}_p(s)\neq 0\,\text{ for all }\,
 s\in\ol{\setH}\text{ and }p\in\calp\big\}.
\end{equation*}
\end{proposition}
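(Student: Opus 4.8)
The plan is to mirror the structure of the proof of Corollary~\ref{C3} and reduce everything to a prime-by-prime statement, using the mean-square condition only to control the ``local'' factors $\wt{a}_p$. First I would verify that $\scrg_\om$ is closed under the convolution \eqref{eq21}: for $a,b\in\scrg_\om$ the product $c=a*b$ is again multiplicative (since $\scrm$ is a group under convolution), and for a prime $p$ one has $c(p)=a(p)+b(p)$ and, for $k\ge 2$, $c(p^k)=\sum_{i+j=k}a(p^i)b(p^j)$. The first-order terms give $\sum_p|c(p)|^2\om^2(p)\le 2\sum_p|a(p)|^2\om^2(p)+2\sum_p|b(p)|^2\om^2(p)<\infty$, while the contributions with $i+j=k\ge 2$ and $i,j\ge 1$ are bounded, via submultiplicativity of $\om$ and the Cauchy--Schwarz inequality, by a product of an $\ell^2$-sum over primes (coming from $a(p)$, resp.\ $b(p)$) and another such sum, hence are finite; the remaining terms with $i=0$ or $j=0$ are dominated by the second defining sum of $\scrg_\om$ for $a$ or $b$. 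So $c\in\scrg_\om$, and $\ve=\delta_1\in\scrg_\om$ is the unit; since $\scrg_\om\subseteq\scrm_\om$ this makes $\scrg_\om$ a unital subsemigroup of $\scrm_\om$.

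Next I would establish the Euler product: for $a\in\scrg_\om$, multiplicativity of $a$ together with the absolute convergence of $\sum_{n}|a(n)|\,\om(n)$ on $\scrm_\om$ (inherited because $\scrg_\om\subseteq\scrm_\om$ by the inclusion $\scra_\om\supseteq\scrm_\om$, which itself uses that $\sum_p|a(p)|^2\om^2(p)<\infty$ forces $\sum_p|a(p)|\,\om(p)$ to converge only when $\calp$ is finite --- so for infinite $\calp$ one must argue more carefully) gives $\wt{a}(s)=\prod_{p\in\calp}\wt{a}_p(s)$ for $s\in\ol{\setH}$. Here the point is that each factor $\wt{a}_p(s)=1+\sum_{k\ge 1}a(p^k)p^{-ks}$ is an absolutely convergent power series in $z=p^{-s}\in\ol{\setU}$ (the $k\ge 2$ part by the second sum in the definition of $\scrg_\om$, the $k=1$ term trivially), hence a continuous function on $\ol{\setU}$, and the infinite product converges because $\sum_p|\wt{a}_p(s)-1|$ converges (the $k=1$ contributions being square-summable and the higher ones absolutely summable, using that $p^{-\re s}\le 1$).

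For the description of the group $\scrg^*_\om$, the necessity of the condition $\wt{a}_p(s)\ne 0$ for all $s\in\ol{\setH}$ and all $p$ is clear: if $b=a^{-1}\in\scrg_\om$ then $\wt{b}_p(s)=1/\wt{a}_p(s)$ for each $p$ (comparing local factors), so $\wt{a}_p$ cannot vanish on $\ol{\setH}$. For sufficiency, assume $\wt{a}_p(s)\ne 0$ on $\ol{\setH}$ for every $p$. By Corollary~\ref{C1} applied with $w(k)=\om(p^k)$, each local factor $\wt{a}_p$ is invertible in $\scra_{w}(\setN_0)$, giving a multiplicative $b_p$ on the one-generator semigroup $\langle p\rangle$ with $\wt{b}_p=1/\wt{a}_p$ and $\sum_{k\ge 1}|b_p(p^k)|\,\om(p^k)<\infty$. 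Define $b\in\scrm$ by $b(p_1^{\nu_1}\cdots p_r^{\nu_r})=\prod_\vr b_{p_\vr}(p_\vr^{\nu_\vr})$. Then formally $\wt{b}=\prod_p\wt{b}_p$ and $a*b=\ve$. The essential estimate --- and the main obstacle --- is to show $b\in\scrg_\om$, i.e.\ $\sum_p|b(p)|^2\om^2(p)<\infty$ and $\sum_{p,k\ge 2}|b(p^k)|\,\om(p^k)<\infty$. From $1/\wt{a}_p(z)$ one reads $b_p(p)=-a(p)$, so the first sum equals $\sum_p|a(p)|^2\om^2(p)<\infty$. For the second, one expands $1/\wt{a}_p=1/(1+\sum_{k\ge1}a(p^k)z^k)$ as a geometric-type series and bounds the coefficients $b_p(p^k)$ for $k\ge 2$ by a convolution power of the sequence $(|a(p^j)|\om(p^j))_{j\ge1}$; summing over $k\ge 2$ and over $p$, the terms in which the index $1$ appears at most once are controlled by the two finite sums defining $\scrg_\om$ together with Cauchy--Schwarz (as in the closure argument), and the terms involving $a(p)$ at least twice are dominated by $\sum_p|a(p)|^2\om^2(p)\cdot\sup_p(\cdots)$. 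Making this bookkeeping precise, exactly as in the cited references \cite{Lu1993,HS1983}, is the technical heart of the proof; once $b\in\scrg_\om$, the identity $a*b=\ve$ (valid coefficient-wise by multiplicativity and the local inversions) shows $a\in\scrg^*_\om$, completing the proof.
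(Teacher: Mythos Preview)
Your closure argument for $\scrg_\om$ under convolution is essentially the paper's, and your identification $b(p)=-a(p)$ is correct, so the first defining sum for $a^{-1}$ is indeed controlled. There are, however, two genuine problems.

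First, the inclusion $\scrg_\om\subseteq\scrm_\om$ that you invoke is false for infinite $\calp$: square--summability of $(|a(p)|\om(p))_p$ does \emph{not} give $\sum_p|a(p)|\om(p)<\infty$, and the paper itself notes $\scrm_\om\subsetneq\scrg_\om$ right after the proposition. So your appeal to absolute convergence of $\sum_n|a(n)|\om(n)$ and to the Euler product ``inherited from $\scrm_\om$'' is unjustified; the Euler factorization has to be argued directly from the two defining sums of $\scrg_\om$ (or avoided altogether).

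Second, and more seriously, the step you flag as ``the technical heart'' is precisely where your sketch breaks down. Expanding $1/\wt a_p$ as a geometric series requires $\sum_{j\ge1}|a(p^j)|\om(p^j)<1$, which need not hold for small primes~$p$; and your bound ``$\sum_p|a(p)|^2\om^2(p)\cdot\sup_p(\cdots)$'' hides a supremum that is not obviously finite without such a smallness condition. The paper circumvents this uniformity issue by a different decomposition: it writes
\[
a=\Big(\underset{p\le p_0}{\AST}a_p\Big)*b*h,
\]
where $p_0$ is chosen so large that $|a(p)|\om(p)\le\tfrac12$ and $\sum_{p>p_0,\,k\ge2}|h(p^k)|\om(p^k)\le\tfrac12$. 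Here $b$ is the completely multiplicative function with $b(p^k)=a(p)^k$ for $p>p_0$, whose inverse is simply $\mu b$; and $h$ satisfies $h(p)=0$ for all~$p$, so the recursion $h^{-1}(p^k)=-\sum_{0\le j\le k-2}h^{-1}(p^j)h(p^{k-j})$ involves only indices $\ge2$ and yields $\sum_{p,\,k\ge2}|h^{-1}(p^k)|\om(p^k)\le1$ by a direct self--bounding estimate. The finitely many local factors $a_p$ with $p\le p_0$ are inverted one at a time via Corollary~\ref{C1}. This three--fold split is exactly the missing idea that replaces your unproven uniform ``$\sup_p$'' bound.
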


Note that $\scrg_\om=\scrm_\om$ for finite $\calp$, whereas
$\scrm_\om\varsubsetneq\scrg_\om$ for infinite $\calp$. Further, for
$a\in\scrg_\om$, the series $\sum_{p\in\calp} a(p)$ even might diverge.

\begin{proof}[Proof of Proposition \ref{P3}.]
The submultiplicativity of the $\om$-norm combined with the Cauchy-Schwarz
inequality entails that $\scrg_\om$ is closed under $*$\,, and trivially
$\ve\in\scrg_\om$. For $a,b\in\scrg^*_\om$ and $p\in\calp$ we have
$a_p\,,b_p\in\scrg^*_\om$ and $(a*b)_p\wt{\phantom{\iota}}(s)=
\wt{a}_p(s)\,\wt{b}_p(s)\neq 0$ for $s\in\ol{\setH}$. Hence $\scrg^*_\om$ is
also closed under $*$\,. It remains to verify that $a\in\scrg^*_\om$ implies
$a^{-1}\in\scrg_\om$.

From Corollary \ref{C1} applied to $\wt{a}_p(s)$ with $w(k)=\om(p^k)$ and
$z=p^{-s}$ for $p\in\calp$ fixed and $k\in\setN_0$ we infer that
$a_p^{-1}\in\scrg_\om$ for each $p\in\calp$. To transfer this invertibility
property from all factors $a_p$ to $a$ we consider the Euler product
representation of $\wt{a}(s)$ written as
\begin{equation*}
 \wt{a}(s)=\prod_{p\leq p_0}\,\wt{a}_p(s)\cdot
 \prod_{p>p_0}\,\Big(1-\frac{a(p)}{p^s}\Big)^{-1}\cdot
 \prod_{p>p_0}\,\Big(1-\frac{a(p)}{p^s}\Big)\,\wt{a}_p(s).
\end{equation*}
It corresponds to the decomposition
\begin{equation} \label{eqII.4.11'}
 a=\Big(\underset{p\leq p_0}{\AST}\,a_p\Big)*b*h
\end{equation}
with $p_0\in\calp$ suitably large, and $b,h\in\scrm$ defined by
\begin{align*}
 b(p^k)
 & = \begin{cases}
     a^k(p) & \text{ for }\,p>p_0,\,k\in\setN_0, \\
     0      & \text{ otherwise,}
     \end{cases}\\[.6ex]
 h(p^k)
 & = \begin{cases}
     a(p^k)-a(p^{k-1})\,a(p) & \text{ for }\,p>p_0,\,k\in\setN, \\
     0                       & \text{ otherwise.}
     \end{cases}
\end{align*}
Obviously $h(p)=0$ for $p\in\calp$ and $b(p)=a(p)$ for all $p>p_0$. We
choose $p_0\in\calp$ sufficiently large such that both estimates
\begin{equation*}
|a(p)|\,\om(p)\leq\frac{1}{2} \text{ for }p>p_0\quad\text{ and }\quad
 \sum_{\substack{p>p_0 \\ k\geq 2}}\,|h(p^k)|\,\om(p^k)\leq\frac{1}{2}
\end{equation*}
hold. Then $b\in\scrg_\om^*$ and $b^{-1}=\mu b\in\scrg_\om$, as $b$ is
completely multiplicative and $|\mu(n)|\leq 1$ for $n\in\setN$. Further,
the submultiplicativity of $\om$ together with the Cauchy-Schwarz
inequality yields $h\in\scrg_\om$.

In order to verify $h\in\scrg^*_\om$ we conclude from $h^{-1}*h=\ve$ that
$h^{-1}(p)=h(p)=0$ for all $p\in\calp$, $h(p^k)=0$ for all $p\leq p_0$
and $k\in\setN$, and
\begin{equation*}
 h^{-1}(p^k)=-\,\sum_{0\leq j\leq k-2}\,h^{-1}(p^j)\,h(p^{k-j})
 \qquad(p>p_0,\,k\geq 2).
\end{equation*}
From this combined with the submultiplicativity of $\om$ we obtain
\begin{align*}
\Sigma:
 & = \sum_{\substack{p^k\leq x \\ k\geq 2}}\,|h^{-1}(p^k)|\,\om(p^k) \\
 & \leq \sum_{\substack{p^k\leq x \\ k\geq 2}}\,
        \sum_{0\leq j\leq k-2}\,|h^{-1}(p^j)|\,w(p^j)\cdot
        |h(p^{k-j})|\,\om(p^{k-j}) \\
 & =    \sum_{\substack{p^k\leq x \\ k\geq 2}}\,|h(p^k)|\,\om(p^k)+
        \sum_{\substack{p^{j+\ell}\leq x \\ j,\ell\geq 2}}\,
        |h^{-1}(p^j)|\,\om(p^j)\cdot|h(p^\ell)|\,\om(p^\ell) \\
 & \leq \big(1+\Sigma\big)\,\sum_{\substack{p^\ell\leq x \\ \ell\geq 2}}\,
        |h(p^\ell)|\,\om(p^\ell)\leq\frac{1}{2}\,\big(1+\Sigma\big).
\end{align*}
Hence $\Sigma\leq 1$ and $h^{-1}\in\scrg_\om$. Now \eqref{eqII.4.11'} entails
that $a\in\scrg_\om$ is a convolution of finitely many elements of
$\scrg^*_\om$ and thus $a^{-1}\in\scrg_\om$.
\end{proof}

An important arithmetical application of weight functions is based on
Proposition \ref{P3} and the notion of related arithmetical functions of
$\scra:=\scra(\caln)$. For $\om\in\scrw_1$, we say that $a\in\scra$ is
\emph{$\om$-related} to $b\in\scra^*$, if $h:=a*b^{-1}\in\scra_\om$. Via
Proposition \ref{P3} the $\om$-relationship of functions $a\in\scrg_\om$,
$b\in\scrg^*_\om$ from their values at prime elements can easily be
verified. This leads, for instance, to a new concept demanded for by
Knopfmacher \cite[Ch.\,7,\,\S\,5]{Kn1975} of Ramanujan expansions of
arithmetic functions (cf. Lucht \cite{Lu2010}).



\end{document}